\begin{document}

\title{\textbf{Flux-limited solutions and state constraints for quasi-convex Hamilton-Jacobi equations}}

\author{Jessica Guerand\footnote{ D\'epartement de Mathématiques et applications, \'Ecole Normale Sup\'erieure (Paris), 
45 rue d’Ulm, 75005 Paris, France. \texttt{jessica.guerand@ens.fr}}}

\maketitle

\newtheorem{thm}{\bf Theorem}[section]
\newtheorem{prop}[thm]{Proposition}
\newtheorem{defin}[thm]{Definition}
\newtheorem{Lm}[thm]{Lemma}
\newtheorem{cor}[thm]{Corollary}
\newtheorem{theo}[thm]{Theorem}
\theoremstyle{remark}
\newtheorem{Rmk}[thm]{Remark}

\begin{center}
\textbf{Abstract}
\end{center}

Imbert and Monneau proved that Soner's formulation of state constraint problems on a bounded interval is equivalent to the so-called flux-limited formulation they introduced recently. In the multidimensional setting, we show this result for a general $\mathcal{C}^{1}$ bounded open set of $\mathbb{R}^{d}$ and obtain the result in the stationary and the evolution type cases. 
\bigbreak
\textbf{Mathematics Subject Classification:} 49L25, 35F30, 35F21
\bigbreak
\textbf{Keywords:} Hamilton-Jacobi equations, quasi-convex Hamiltonians, flux-limited solutions, state constraints, viscosity solutions

\section{Introduction}
\subsection{Hamilton-Jacobi equation and state constraint problems}

State constrainted Hamilton-Jacobi equations naturally appear in optimal control problems where trajectories must remain in a domain $\Omega$. Introducing flux-limited solutions, Imbert and Monneau study a more general boundary condition and prove existence and uniqueness results for multi-junction domain in dimension 1 in space \cite{im} and in the multidimensional case \cite{im2}.

Let us recall Soner's formulation of state constraint (SC) problems \cite{son}: a function $u\in C(\bar{\Omega})$ is a viscosity solution of the SC problem if it satisfies in the viscosity sense, 
\begin{equation}
\label{SC}
 \left\{\begin{array}{lll}
u+H(\nabla u)= 0 & \mbox { in } & \Omega\\
u+H(\nabla u)\geq 0 & \mbox { on } & \partial\Omega.
\end{array}\right.
\end{equation}
This formulation seems not to take into account the boundary behavior of $u$ as a sub-solution. However, Soner proves \cite{son} the uniqueness of bounded uniformly continuous solutions of (\ref{SC}) for a convex Hamiltonian $H$.  

In \cite{ish}, also for a convex Hamiltonian $H$, Ishii and Koike show the uniqueness of solutions not necessarily continuous for the same equation than (\ref{SC}) with an extra inequation for subsolutions,
\begin{equation}
\label{SC2}
 \left\{\begin{array}{lll}
u+H(\nabla u)= 0 & \mbox { in } & \Omega\\
u+H(\nabla u)\geq 0 & \mbox { on } & \partial\Omega\\
u+H_{in}(\nabla u)\leq 0 & \mbox { on } &  \partial\Omega,
\end{array}\right.
\end{equation}
 where $H_{in}$ is an «inward Hamiltonian» \cite{ish}.
 
In \cite{im}, for a quasi-convex continuous coercive Hamiltonian $H$ and $\Omega$ a bounded interval, Imbert and Monneau proved that the SC problem (\ref{SC}) is equivalent to the flux-limited problem,
\begin{equation}
\label{SC3}
 \left\{\begin{array}{lll}
u+H(\nabla u)= 0 & \mbox { in } & \Omega\\
u+H^{-}(\nabla u)\geq 0 & \mbox { on } & \partial\Omega\\
u+H^{-}(\nabla u)\leq 0 & \mbox { on } &  \partial\Omega,
\end{array}\right.
\end{equation} 
where $H^{-}$ is the nonincreasing part of the Hamiltonian along the inward vector defined in the next section. 
They also proved the result for the evolution type equation. 

In this paper, we prove the same result than Imbert and Monneau for a general $\mathcal{C}^{1}$ bounded open set $\Omega$ of $\mathbb{R}^{d}.$ The function $H^{-}$ is  the same as $H_{in}$ in \cite{ish} if $H$ and the domain $\Omega$ satisfy the previous assumptions.  

There are links between those three problems in some cases. We have the direct following implication. A solution of (\ref{SC3}) is also a solution of (\ref{SC2}) which is also a solution of (\ref{SC}). Imbert and Monneau show that a function is a solution of (\ref{SC}) if and only if it is a solution of (\ref{SC3}) with the previous assumptions on $H$ and for an open interval. We show it in the multidimensional setting. This result shows that the uniqueness obtained by Ishii and Koike for the problem (\ref{SC2}) implies the uniqueness for the problem (\ref{SC}) even for not necessarily continuous solutions. But with some restrictive assumptions: a $\mathcal{C}^{1}$ domain, a coercive Hamiltonian and a weak continuity condition has to be imposed to sub-solutions.
The main contribution of this article is two-fold: on the one hand, we can deal with equation of evolution type; on the other hand, the Hamiltonian is not necessarily convex.

The proof is essentially based on the ideas in \cite{im,im2} where the authors show that it is possible to reduce the class of test functions. This new class contains only test functions with one slope in space. To obtain this result for a multi-dimensional regular bounded open set, we introduce a local parametrization of the boundary. After the proof of the theorem, we provide direct proofs in particular cases. First we give a direct proof for the general super-solution case, second in dimension 1, we prove the result in the stationary case.  

There are other related works. For example, Hamilton-Jacobi equations posed on two domains separated by a frontier where some transmission conditions should be imposed, see \cite{RSZ,RZ}. In \cite{achtch,acct}, the authors study an optimal control problem where trajectories of the controlled system must remain in the embedded network.

\subsection{Main theorem}

In order to state our main theorem, let us first recall the notation of the article \cite{im} and \cite{im2}. Let T be positive.
Let $\Omega$ be a bounded open set of $\mathbb{R}^{d+1}$ with a boundary of class $\mathcal{C}^{1}$.

The Hamiltonian $H:\mathbb{R}^{d+1}\rightarrow \mathbb{R}$ is assumed to satisfy the following properties
\begin{equation}
\label{hyp}
\left\{\begin{array}{ll}
 \textbf{continuous,} \\
 \textbf{coercive}, & \lim_{|p|\rightarrow+\infty}H(p)=+\infty \\
\textbf{quasi-convex}, & \forall \lambda \in \mathbb{R},\quad \{ H\leq \lambda\} \mbox { is convex}.  
\end{array}\right.
\end{equation}

We define $H^{-}:\mathbb{R}^{d+1}\rightarrow \mathbb{R}$ (resp. $H^{+}$) the nonincreasing (resp. nondecreasing) part of the Hamiltonian $H$ by introducing few notations. 
Let $x\in \partial\Omega,$ $p\in \mathbb{R}^{d+1}$ and $n_{x}$ be the outward normal vector in $x$. The component of $p$ in the tangent plane to $\partial \Omega$ in $x$ is denoted by $p'$  and the component along $-n_{x}=N_{x}$ is denoted by  $p_{N}$,  so  $p_{N}=p\cdot N_{x}$ and $p=(p',p_{N})\in \mathbb{R}^{d}\times\mathbb{R}.$
By the same way, the tangential componant of  $\nabla u$ is denoted by $\nabla'u$ and let $\partial_{N}u=\nabla u \cdot N_{x}.$ Let $H_{x}:\mathbb{R}^{d+1}\rightarrow \mathbb{R}$ be the Hamiltonian defined with these components depending on $x\in \partial\Omega$. 
Let $\pi^{0}(p')\in \mathbb{R}$ be such that 
$$\min_{p_{N}\in\mathbb{R}} H_{x}(p',p_{N})=H_{x}(p',\pi^{0}(p')).$$
The nonincreasing part of $H_{x}$ is defined by 
$$H_{x}^{-}(p',p_{N})=\left\{ \begin{array}{lll}
H_{x}(p',p_{N}) & \mbox{  if  } &p_{N}\leq \pi^{0}(p') \\
H_{x}(p',\pi^{0}(p')) & \mbox{  otherwise.}
\end{array}\right.$$

\begin{Rmk}
We can show that $H_{x}$ depends continuously on $x\in\partial\Omega$ using the fact that the boundary is of class $\mathcal{C}^{1}.$ To simplify the notations, in the following, we will use $H$ and $H^{-}$ instead of $H_{x}$ and $H_{x}^{-}$ but we will remember that $H^{-}$ and $H$ depends on $x\in\partial\Omega$.
\end{Rmk}

Let us state our main result.

\begin{theo}[Reformulation of state constraints] 
\label{mainth}
~\\
Assume $H:\mathbb{R}^{d+1}\rightarrow\mathbb{R}$ satisfies (\ref{hyp}), and $u:(0,T)\times\bar{\Omega}\rightarrow\mathbb{R}$
then $u$ is a viscosity solution of 
\begin{equation}
\label{sens1}
\left\{ \begin{array}{lll}
u_{t}+H(\nabla u)\geq 0 & \mbox { in } & (0,T)\times\bar{\Omega}\\
u_{t}+H(\nabla u)\leq 0 & \mbox { in } & (0,T)\times\Omega
\end{array}\right.
\end{equation} 
if and only if $u$ is a viscosity solution of the flux-limited problem

\begin{equation}
\label{sens2}
\left\{ \begin{array}{lll}
u_{t}+H(\nabla u) = 0 & \mbox { in } & (0,T)\times\Omega\\
u_{t}+H^{-}(\nabla u)= 0 & \mbox { on } & (0,T)\times\partial\Omega,
\end{array}\right.
\end{equation} 
where $H^{-}$ is the decreasing part of the Hamiltonian along the inward normal vector. 
\end{theo}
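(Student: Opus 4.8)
The plan is to split the equivalence along the decomposition of each formulation into sub- and super-solution parts, after noticing that the two problems carry exactly the same interior requirement: combining the two inequalities of (\ref{sens1}) yields $u_{t}+H(\nabla u)=0$ in $(0,T)\times\Omega$, which is precisely the interior equation of (\ref{sens2}). Hence only the behaviour on $(0,T)\times\partial\Omega$ has to be analysed. I would fix a boundary point $x_{0}$ and use the $\mathcal{C}^{1}$ regularity to introduce a local parametrization straightening $\partial\Omega$ to $\{y_{d+1}=0\}$ with inward normal $e_{d+1}$; in these coordinates the splitting $p=(p',p_{N})$ is exact and, as recorded in the Remark, the transported Hamiltonian $H_{y}$ depends continuously on $y$. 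This reduces everything to the flat half-space, at the cost of an $x$-dependent Hamiltonian that I will handle through its continuity.

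For the super-solution parts the equivalence is algebraic in one direction and a slope reduction in the other. Since $H^{-}\le H$ pointwise, any test function touching $u$ from below at a boundary point with $u_{t}+H^{-}(\nabla\varphi)\ge 0$ automatically satisfies $u_{t}+H(\nabla\varphi)\ge 0$, so (\ref{sens2}) implies the boundary super-solution part of (\ref{sens1}). For the converse, given $\varphi$ touching from below at $(t_{0},x_{0})$, I distinguish the case $\partial_{N}\varphi\le\pi^{0}(\nabla'\varphi)$, where $H^{-}=H$ and there is nothing to prove, from $\partial_{N}\varphi>\pi^{0}(\nabla'\varphi)$. In the latter case I lower the normal slope to $\pi^{0}$ by setting $\tilde\varphi=\varphi+(\pi^{0}-\partial_{N}\varphi)\,y_{d+1}$; as $\pi^{0}-\partial_{N}\varphi<0$ and $y_{d+1}\ge 0$ on $\bar\Omega$, one has $\tilde\varphi\le\varphi\le u$ near $x_{0}$, so $\tilde\varphi$ still touches from below and the state-constraint super-solution inequality applied to $\tilde\varphi$ gives $u_{t}+H(\nabla'\varphi,\pi^{0})=u_{t}+H^{-}(\nabla\varphi)\ge 0$. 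This is the clean ``general super-solution'' argument, and flattening is exactly what makes the linear modification admissible on the curved domain.

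The delicate point is the sub-solution direction, where (\ref{sens1}) imposes nothing on the boundary yet one must recover $u_{t}+H^{-}(\nabla\varphi)\le 0$ there from the interior sub-solution property alone. Let $\varphi$ touch $u$ from above at $(t_{0},x_{0})$ on the flattened boundary; after localizing with a strict maximum I would penalize the boundary by considering $\varphi^{\varepsilon}=\varphi-\varepsilon\log y_{d+1}$, for which the maximum of $u-\varphi^{\varepsilon}$ is forced into the open half-space since $u-\varphi^{\varepsilon}\to-\infty$ as $y_{d+1}\to 0^{+}$. At the interior contact point $(t_{\varepsilon},y_{\varepsilon})$ the gradient of $\varphi^{\varepsilon}$ differs from that of $\varphi$ only by lowering the normal slope to $q_{\varepsilon}=\partial_{N}\varphi(t_{\varepsilon},y_{\varepsilon})-\varepsilon/y_{\varepsilon,d+1}$, and the interior inequality gives $\varphi_{t}+H(\nabla'\varphi,q_{\varepsilon})\le 0$. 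Coercivity is the crucial ingredient: it forbids $q_{\varepsilon}\to-\infty$, which would send $H\to+\infty$, so $q_{\varepsilon}$ stays bounded and, along a subsequence, $q_{\varepsilon}\to q^{\ast}\le\partial_{N}\varphi(t_{0},x_{0})$. Passing to the limit and using that $H(\nabla'\varphi,\cdot)$ is nonincreasing up to its minimizer $\pi^{0}$ by quasi-convexity, one checks $H(\nabla'\varphi,q^{\ast})\ge H^{-}(\nabla\varphi)$ whether $\partial_{N}\varphi$ lies above or below $\pi^{0}$, whence $u_{t}+H^{-}(\nabla\varphi)\le u_{t}+H(\nabla'\varphi,q^{\ast})\le 0$.

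I expect the main obstacle to be precisely this last step in the genuinely multidimensional setting: making the penalization interact cleanly with the geometry. The logarithmic term lowers only the normal slope and leaves the tangential one untouched once the boundary is flattened, which is why the local parametrization and the continuity of $y\mapsto H_{y}$ are needed; in curved coordinates the term $\varepsilon\nabla d/d$, with $d$ the distance to $\partial\Omega$, would carry an uncontrolled tangential component along sequences approaching $x_{0}$ tangentially, since $|x_{\varepsilon}-x_{0}|/d(x_{\varepsilon})$ need not stay bounded. The remaining care concerns the $\mathcal{C}^{1}$ regularity of the distance function in a one-sided neighbourhood, the standard verification that the penalized maxima converge to $(t_{0},x_{0})$, and the book-keeping ensuring $\varphi_{t}(t_{\varepsilon},y_{\varepsilon})\to\varphi_{t}(t_{0},x_{0})$ and $\nabla'\varphi(t_{\varepsilon},y_{\varepsilon})\to\nabla'\varphi(t_{0},x_{0})$, all of which become routine once the flat reduction is in place.
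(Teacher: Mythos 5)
Your proposal is correct, and it reaches the theorem by a genuinely different route than the paper, which channels everything through the reduction of test functions (Proposition \ref{defequiv}) built on the critical-slope Lemmas \ref{pentescri} and \ref{pentescri2}: step 1 of the paper's proof invokes part b) of that proposition with $A=A_{0}$ (so that $F_{A_{0}}=H^{-}$), step 2 tests with the reduced functions $\phi(t,X')+\phi_{0}(x^{N})$ whose normal slope is $\pi^{+}(p_{0}',A_{0}(p_{0}'))$, and step 3 is your observation that $H^{-}\leq H$. You instead (i) treat the super-solution direction by lowering the normal slope with the linear correction $(\pi^{0}(\nabla'\varphi)-\partial_{N}\varphi)(x^{N}-\psi^{N}(X'))$, which is essentially the paper's own alternative argument in Section \ref{app:app1} (there written with $\pi^{-}(\nabla'\varphi,A_{0}(\nabla'\varphi))$, which coincides with $\pi^{0}(\nabla'\varphi)$), and (ii) replace the critical-slope lemma for sub-solutions by a one-shot logarithmic penalization producing \emph{some} slope $q^{*}\leq\partial_{N}\varphi(t_{0},X_{0})$ with $H(p',q^{*})\leq\lambda$, after which $H^{-}(p',p_{N})\leq H^{-}(p',q^{*})\leq H(p',q^{*})\leq\lambda$ by monotonicity of $H^{-}$. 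Note that your term $-\varepsilon\log(x^{N}-\psi^{N}(X'))$ plays exactly the role of the perturbation $\epsilon\,(x^{N}_{\epsilon}-\psi^{N}(X'_{\epsilon}))/(x^{N}-\psi^{N}(X'))$ in the finiteness part of Lemma \ref{pentescri}, with coercivity used identically, so the mechanism is shared; what differs is the bookkeeping. What you lose is generality: Lemma \ref{pentescri} gives the viscosity inequality at the critical slope $\bar{p}$ itself, which is what makes Proposition \ref{defequiv} work for an arbitrary flux limiter $A\geq A_{0}$ and yields the reduced-test-function framework reusable elsewhere; your weaker conclusion (inequality at some slope $\leq p_{N}$) suffices only because the theorem concerns $A=A_{0}$. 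What you gain is a shorter, self-contained proof of Theorem \ref{mainth} that never needs the modified test functions with slope $\pi^{+}(p',A(p'))$. Your choice of flattening rather than the distance function is also the right call for a merely $\mathcal{C}^{1}$ boundary, where $d(\cdot,\partial\Omega)$ need not be differentiable; this is precisely why the paper works with the graph function $\psi^{N}$ of Definition \ref{defpsi}.

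One step you should not label ``routine'': the convergence of the penalized maxima $(t_{\varepsilon},y_{\varepsilon})$ to $(t_{0},X_{0})$ is exactly where the weak continuity condition (\ref{lim2}) enters, and it is indispensable, not merely technical. Without it the sub-solution direction is false: $u=0$ in $\Omega$, $u=1$ on $\partial\Omega$ is an upper semi-continuous interior sub-solution for which $\sup\,(u-\varphi+\varepsilon\log y_{d+1})$ stays bounded away from $u(t_{0},X_{0})-\varphi(t_{0},X_{0})$, the penalized maxima do not converge to the boundary contact point, and the boundary inequality $u_{t}+H^{-}(\nabla u)\leq 0$ fails. To repair the wording, take the sequence $(t_{k},z_{k})\rightarrow(t_{0},X_{0})$ with $z_{k}\notin\partial\Omega$ and $u(t_{k},z_{k})\rightarrow u^{*}(t_{0},X_{0})$ furnished by (\ref{lim2}), observe that for each fixed $k$ the penalized supremum is at least $u(t_{k},z_{k})-\varphi(t_{k},z_{k})+\varepsilon\log d(z_{k})$, send $\varepsilon\rightarrow 0$ and then $k\rightarrow\infty$ to conclude the supremum tends to $0$, and use the strict maximum together with upper semi-continuity to force $(t_{\varepsilon},y_{\varepsilon})\rightarrow(t_{0},X_{0})$. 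This is the same use of (\ref{lim2}) the paper makes, explicitly, in the proof of Lemma \ref{pentescri}; with it inserted, your argument is complete.
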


Using a similar proof we can show the following stationary counterpart of the previous theorem. 

\begin{theo}[Reformulation of state constraints, stationary case] 
~\\
Assume $H:\mathbb{R}^{d+1}\rightarrow\mathbb{R}$ satisfies (\ref{hyp}) and $u:\bar{\Omega}\rightarrow\mathbb{R}$
then $u$ is a viscosity solution of 
\begin{equation}
\label{sens3}
\left\{ \begin{array}{lll}
u+H(\nabla u)\geq 0 & \mbox { in } & \bar{\Omega}\\
u+H(\nabla u)\leq 0 & \mbox { in } & \Omega
\end{array}\right.
\end{equation} 
if and only if $u$ is a viscosity solution of the flux-limited problem

\begin{equation}
\label{sens4}
\left\{ \begin{array}{lll}
u+H(\nabla u) = 0 & \mbox { in } & \Omega\\
u+H^{-}(\nabla u)= 0 & \mbox { on } & \partial\Omega.
\end{array}\right.
\end{equation} 
\end{theo}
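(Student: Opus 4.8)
The plan is to prove the two implications separately, observing at the outset that inside $\Omega$ both formulations reduce to the same statement $u+H(\nabla u)=0$, so that all the content is concentrated on $\partial\Omega$. For the easy implication (\ref{sens4}) $\Rightarrow$ (\ref{sens3}), I would note that in $\Omega$ the equality $u+H(\nabla u)=0$ immediately yields both inequalities of (\ref{sens3}); on $\partial\Omega$, since $H^{-}\leq H$ by construction, any test function $\varphi$ touching $u$ from below at a boundary point $x_0$ satisfies $u(x_0)+H(\nabla\varphi(x_0))\geq u(x_0)+H^{-}(\nabla\varphi(x_0))\geq 0$; as the subsolution inequality of (\ref{sens3}) is required only in the open set $\Omega$, nothing more is needed.

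For the converse (\ref{sens3}) $\Rightarrow$ (\ref{sens4}) the interior equation is again immediate, and the boundary supersolution inequality $u+H^{-}(\nabla u)\geq 0$ is obtained by \emph{lowering the normal slope}. Fixing the local $\mathcal{C}^{1}$ parametrization that straightens $\partial\Omega$ near $x_0$ and writing $\nabla\varphi(x_0)=(p',p_N)$, the case $p_N\leq\pi^{0}(p')$ is trivial since then $H^{-}=H$; if $p_N>\pi^{0}(p')$ I would replace $\varphi$ by $\psi=\varphi-(p_N-\pi^{0}(p'))\chi$, where $\chi\geq 0$ in $\Omega$, $\chi(x_0)=0$ and $\nabla\chi(x_0)=N_{x_0}$, with a quadratic correction absorbing the curvature of the boundary. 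Then $\psi$ still touches $u$ from below at $x_0$ with $\nabla\psi(x_0)=(p',\pi^{0}(p'))$, and the state-constraint supersolution inequality, valid up to the boundary, gives $u(x_0)+H(p',\pi^{0}(p'))\geq 0$, which is exactly $u(x_0)+H^{-}(p',p_N)\geq 0$.

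The main obstacle is the boundary subsolution inequality $u+H^{-}(\nabla u)\leq 0$ on $\partial\Omega$, because here (\ref{sens3}) supplies information only in the open set $\Omega$. Given $\varphi$ touching $u$ from above at $x_0\in\partial\Omega$, I would again use the straightening parametrization together with the reduction to test functions having a single slope in the normal variable, and then \emph{force an interior maximum}: subtracting an inward penalization $-\beta\chi$ tilts the graph of $\varphi$ downward inside $\Omega$, so that for $\beta>0$ the function $u-\varphi+\beta\chi$ attains a maximum at some $x_\beta\in\Omega$, where the interior subsolution inequality applies and yields $u(x_\beta)+H(\nabla\varphi(x_\beta)-\beta\nabla\chi(x_\beta))\leq 0$. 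Coercivity of $H$ keeps these gradients bounded and provides compactness, and passing to the limit while using quasi-convexity to control the limiting normal slope relative to $\pi^{0}(p')$ should recover $u(x_0)+H^{-}(\nabla\varphi(x_0))\leq 0$. Making this limit rigorous — locating $x_\beta$, ensuring it does not escape prematurely to the boundary, and identifying the limiting slope in the region where $H^{-}$ is attained — is the delicate point and the heart of the argument.

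Finally, I would emphasize that this is precisely the stationary analogue of the evolution statement: the zeroth-order term $u$ plays in every pointwise inequality exactly the inert additive role played by $u_t$ in (\ref{sens1})–(\ref{sens2}), and the parametrization, the slope-lowering, and the interior-maximum construction all act solely on the spatial gradient and on the pair $(H,H^{-})$. Consequently the same reasoning applies verbatim, which is why the result follows by a proof entirely parallel to that of Theorem \ref{mainth}.
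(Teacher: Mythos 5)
Your easy implication and your supersolution step are sound, and the latter coincides with the paper's own direct proof for super-solutions (Section \ref{app:app1}): there the slope is lowered by adding a nonpositive multiple of $\chi(X)=x^{N}-\psi^{N}(X')$, which is exactly your $\chi$ realized through the local parametrization of Definition \ref{defpsi} (note that your ``quadratic correction absorbing the curvature'' is not available for a merely $\mathcal{C}^{1}$ boundary; the function $\psi^{N}$ is what makes $\chi\geq 0$ exact). The genuine gap is the boundary subsolution inequality, which you yourself flag as ``the heart of the argument'' but do not prove, and your proposed mechanism fails as stated. Since $\chi=0$ on $\partial\Omega$, the penalized function $u-\varphi+\beta\chi$ equals $u-\varphi\leq 0$ on the boundary with value $0$ at $x_{0}$, while its interior values can all be negative (think of $u-\varphi\sim -c\,d(x)^{1/2}$ with $d$ the distance to $\partial\Omega$: then $\beta d-c\sqrt{d}<0$ near $x_{0}$). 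So the maximum can stick at $x_{0}$, where (\ref{sens3}) provides no subsolution information, and the interior equation is never used. Ruling out such behaviour is not free: it is precisely the statement that the critical slope $\bar{p}=\inf\{p:\varphi+p\chi\geq u \mbox{ near } x_{0}\}$ is finite, and that statement uses the equation itself, not just a bounded tilt $-\beta\chi$.

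Two ingredients are missing. First, the weak continuity condition (\ref{lim2}), which you never invoke; without it the subsolution implication is false (an upper semicontinuous subsolution may jump up on $\partial\Omega$), and it is the hypothesis producing interior points $x_{\epsilon}\rightarrow x_{0}$ with $u(x_{\epsilon})\rightarrow u(x_{0})$. Second, a device that actually expels the contact point from the boundary: in Lemma \ref{pentescri} the paper perturbs the test function by the barrier $\epsilon\frac{x_{\epsilon}^{N}-\psi^{N}(X'_{\epsilon})}{x^{N}-\psi^{N}(X')}$, which blows up on $\partial\Omega$, and coercivity of $H$ then yields $\bar{p}>-\infty$; in the stationary case at hand the paper even gives a simpler proof (last lemma of Section \ref{app:app1}): coercivity makes $u$ Lipschitz in $\Omega$, (\ref{lim2}) extends the Lipschitz bound to $\bar{\Omega}$, and $\bar{p}\geq -(C+C')$ follows. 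With $\bar{p}$ finite one gets $u+H(p',p_{N}+\bar{p})\leq 0$, hence $A_{0}(p')\leq -u(x_{0})$, and the conclusion $u+H^{-}(\nabla u)\leq 0$ then comes through the reduction of test functions, Proposition \ref{defequiv} (part b): every interior subsolution is an $A_{0}$-flux-limited subsolution), combined with $F_{A_{0}}=H^{-}$. Your closing observation that the stationary theorem follows from the evolution argument by replacing $u_{t}$ with $u$ is correct and is exactly the paper's reduction; but the core you defer to is precisely this critical-slope machinery, so as it stands the proposal does not establish the hard half of the equivalence.
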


\section{Definition of flux-limited solutions}

We call flux-limited solutions, the viscosity solutions for the flux-limited problem. 
Let us recall the definition of flux-limited solutions in \cite{im} used in our proofs.
The class of test functions we use, is  $\mathcal{C}^{1}((0,T)\times\bar{\Omega}).$ 
We define so-called flux-limiter functions $A: \mathbb{R}^{d}\rightarrow\mathbb{R}$ which are continuous and quasi-convex functions.
We define $F_{A}$ which depends on $x\in\partial\Omega$ as $H^{-}$ depends on $x$, by 
$$F_{A}(p)=\max(A(p'),H^{-}(p',p_{N}))$$ and let 
$$A_{0}(p')=\min_{p_{N}\in \mathbb{R}} H(p',p_{N}).$$
We recall the definition of upper and lower semi-continuous envelopes $u^{*}$ and $u_{*}$ of a (locally bounded) function $u$ defined on $[0,T)\times\bar{\Omega}$,
$$ u^{*}(t,x)=\limsup_{(s,y)\rightarrow (t,x)} u(s,y) \quad \mbox{and} \quad u_{*}(t,x)=\liminf_{(s,y)\rightarrow (t,x)} u(s,y).$$

\begin{defin}[Flux-limited solutions]
\label{solvisc}
Let $H$ satisfies (\ref{hyp}) and $u:(0,T)\times\bar{\Omega} \rightarrow \mathbb{R}$ be locally bounded.
We say that $u$ is a \emph{viscosity sub-solution} (resp. \emph{viscosity super-solution}) in $(0,T)\times\bar{\Omega}$ of
\begin{equation}
\label{eqHJ}
\left\{ \begin{array}{ll}
u_{t}+H(\nabla u) =0 & \mbox{ in } (0,T)\times\Omega\\
u_{t}+F_{A}(\nabla u) =0 & \mbox{ on } (0,T)\times\partial\Omega,
\end{array} \right. 
\end{equation}
if for all $\varphi \in \mathcal{C}^{1}((0,T)\times\bar{\Omega})$ such that 
$u^{*}\leq\varphi$ (resp. $u_{*}\geq\varphi$) in a neighborhood of  $(t_{0,}x_{0}) \in (0,T)\times\bar{\Omega}$
with equality at $(t_{0,}x_{0})$ (we say that $\varphi$ touch $u^{*}$ from above (resp. $u_{*}$ from below) in $(t_{0},x_{0})$ ), we have
$$\left\{ \begin{array}{ll}
\varphi_{t}+H(\nabla \varphi) \leq 0 \quad\mbox{ (resp. } \geq 0 \mbox{ ) } & \mbox{ if } (t_{0},x_{0}) \in (0,T)\times\Omega\\
\varphi_{t}+F_{A}(\nabla \varphi) \leq 0 \quad\mbox{ (resp. } \geq 0 \mbox{ ) } & \mbox{ if } (t_{0},x_{0}) \in (0,T)\times\partial\Omega,
\end{array} \right.$$
and $u^{*}$ 
satisfy 
\begin{equation}
\label{lim2}
\forall (t,x)\in (0,T)\times\partial\Omega \quad u^{*}(t,x)=\limsup_{y\rightarrow x,s\rightarrow t, y\notin \partial\Omega} u^{*}(s,y). 
\end{equation}
Moreover, we say that $u$ is a solution of (\ref{eqHJ}) if $u$ is both a sub and a super-solution.  
\end{defin}

\begin{Rmk} 
We define stationary solutions by replacing $u_{t}$ and $\phi_{t}$ by $u$ and $\phi$ and by giving up time.  
\end{Rmk}

\section{Reduction of test function for a $\mathcal{C}^{1}$ domain}

We give the results and proofs only for the evolution type problem as it is exactly the same for the stationary one where we only have to get rid of time and replace each $u_{t}$ and $\phi_{t}$ by $u$ and $\phi$. 

We define $\pi^{+},\pi^{-}:\mathbb{R}^{d+1}\rightarrow \mathbb{R}$ as in \cite{im2} by 
$$\pi^{+}(p',\lambda)=\inf\{p \in \mathbb{R}: H(p',p)=H^{+}(p',p)=\lambda\},$$
and
$$\pi^{-}(p',\lambda)=\sup\{p \in \mathbb{R}: H(p',p)=H^{-}(p',p)=\lambda\}.$$

Let $X_{0}\in\partial\Omega$ be fixed. For all $X\in\bar{\Omega}$, $X=(X',x^{N})$ where $x^{N}$ is the component on $N_{X_{0}}$ and $X'$ the tangential component. 

Following closely \cite{im,im2}, let us give an equivalent definition of viscosity solutions which reduces the class of test functions. In \cite{im2}, the authors prove the result for hyperplane domain. Here, we prove it for a $\mathcal{C}^{1}$ bounded domain.  

\begin{defin}[Equivalent definition of viscosity solutions]
Let $H$ satisfies (\ref{hyp}) and  $A:\mathbb{R}^{d}\rightarrow\mathbb{R}$ a continuous flux-limiter function such that
$$ \forall p'\in \mathbb{R}^{d}, \quad A(p') \geq A_{0}(p').$$
Let $u:(0,T)\times\bar{\Omega}\rightarrow\mathbb{R}$ be locally bounded. 
We say that $u$ is a \emph{reduced sub-solution} (resp. \emph{reduced super-solution}) of (\ref{eqHJ}) where $F=F_{A}$ in $(0,T)\times\bar{\Omega}$ if and only if $u^{*}$ (resp. $u_{*}$) satisfies (\ref{lim2})
and $u$ is a sub-solution (resp. super-solution) in $(0,T)\times\Omega$ and $\forall \varphi \in \mathcal{C}^{1}((0,T)\times\bar{\Omega})$ such that $u^{*}\leq\varphi$ (resp. $u_{*}\geq\varphi$) in a neighborhood of $(t_{0},X_{0})$ in $(0,T)\times\partial\Omega$ with equality at $(t_{0},X_{0})$, of the following form $$\varphi(t,X',x^{N})=\phi(t,X')+\phi_{0}(x^{N}),$$
where 
$$ \left\{ \begin{array}{l}
\phi \in \mathcal{C}^{1}((0,T)\times\mathbb{R}^{d})\\
D'\phi(t_{0},X_{0}')=p_{0}',
\end{array}\right.$$
and 
$$ \left\{ \begin{array}{l}
\phi_{0} \in \mathcal{C}^{1}(\mathbb{R})\\
\phi_{0}'(x_{0}^{N})=\pi^{+}(p_{0}',A(p_{0}')),
\end{array}\right.$$

we have  $$\varphi_{t} + F_{A}(\nabla\varphi)\leq 0\quad \mbox{ (resp. } \geq 0 \mbox{ ) at } (t_{0},X_{0}) .$$
\end{defin}

\begin{prop}
\label{defequiv}
a) The two previous definitions are equivalent, i.e. a sub-solution (resp. super-solution) of (\ref{eqHJ}) is a reduced one if and only if it is a viscosity sub-solution (resp. super-solution). 
~\\
b) If $u$ is a sub-solution of $u_{t}+H(\nabla u)\leq 0$ in $ (0,T)\times\Omega$ then it is a $A_{0}$-flux limited sub-solution.  
\end{prop}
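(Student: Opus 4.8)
The plan is to prove both parts by the test-function reduction of \cite{im,im2}, after a $\mathcal{C}^1$ flattening of the boundary. The direction \emph{viscosity $\Rightarrow$ reduced} in a) is immediate, since every admissible reduced test function of the split form $\varphi(t,X',x^N)=\phi(t,X')+\phi_0(x^N)$ is a particular $\mathcal{C}^1$ test function, so the viscosity inequalities apply verbatim; the content is the converse together with b). First I would fix the contact point $(t_0,X_0)\in(0,T)\times\partial\Omega$ and use that $\partial\Omega$ is $\mathcal{C}^1$ to write, in the coordinates aligned with the tangent plane and the inward normal $N_{X_0}$, $\Omega=\{x^N>\gamma(X')\}$ locally with $\gamma\in\mathcal{C}^1$, $\gamma(X_0')=0$ and $D'\gamma(X_0')=0$. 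The vanishing of $D'\gamma$ at $X_0'$ is the decisive point: it makes the normal and tangential variables decouple at first order, so that adding a multiple of $x^N-\gamma(X')$ (which is $\geq 0$ in $\bar\Omega$, vanishes on $\partial\Omega$, and splits as a function of $x^N$ plus a function of $X'$) changes $\partial_N\varphi(t_0,X_0)$ without altering the tangential slope $p_0'=D'\varphi(t_0,X_0)$ or the time slope $a=\varphi_t(t_0,X_0)$. This reduces the geometry to the hyperplane case of \cite{im2}.

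For a) I would then adjust the normal slope to the pinned value $\pi^{+}(p_0',A(p_0'))$, using that $F_A(p',p_N)=\max(A(p'),H^{-}(p',p_N))$ is nonincreasing in $p_N$, equal to $A(p')$ for $p_N\geq\pi^{-}(p',A(p'))$ and to $H^{-}(p',p_N)$ for $p_N\leq\pi^{-}(p',A(p'))$. In the sub-solution case, touching $u^{*}$ from above is preserved when the normal slope is \emph{increased}, so when $\pi^{-}(p_0',A(p_0'))\leq q_0\leq\pi^{+}(p_0',A(p_0'))$ I raise it to $\pi^{+}$ and the reduced inequality yields $a+F_A(p_0',\pi^{+}(p_0',A(p_0')))=a+A(p_0')\leq 0$, which is the desired bound since $F_A=A$ there. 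For $q_0>\pi^{+}(p_0',A(p_0'))$ I would lower the normal slope toward $\pi^{+}$: either domination from above is preserved down to $\pi^{+}$, where the reduced inequality applies, or at some intermediate slope a new \emph{interior} contact point appears, and since that slope lies on the nondecreasing branch one has $H(p_0',\cdot)\geq A(p_0')$, so the interior sub-solution inequality again gives $a+A(p_0')\leq 0$. Finally, the range $q_0\leq\pi^{-}(p_0',A(p_0'))$, where $F_A=H^{-}$, is covered directly by part b). In particular, for the flux limiter $A=A_0$ needed in the main theorem one has $F_{A_0}=H^{-}$, so b) alone settles the whole boundary sub-solution inequality.

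The heart of the matter is b). Let $\varphi$ touch $u^{*}$ from above at $(t_0,X_0)\in\partial\Omega$; I must show $a+H^{-}(p_0',q_0)\leq 0$, and since $H\geq H^{-}$ it suffices to prove $a+H(p_0',q_0)\leq 0$. I would first make the touching strict by subtracting $|t-t_0|^{2}+|X-X_0|^{4}$, then invoke the boundary continuity condition (\ref{lim2}): there exist interior points $(s_k,y_k)\to(t_0,X_0)$ with $u^{*}(s_k,y_k)\to u^{*}(t_0,X_0)$. Penalizing the test in the normal direction, i.e. considering the maximum over $\bar\Omega$ of $u^{*}-\varphi+\delta\,d(X)$ with $d$ the (inward) distance to $\partial\Omega$, I would show, using the approximating sequence to keep the value positive, that the maximizer moves to an interior point $(t_\delta,X_\delta)\to(t_0,X_0)$ at which the spatial gradient of the perturbed test tends to $(p_0',q_0)$. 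Applying the interior sub-solution inequality there and letting $\delta\to 0$, the continuity of $H$ yields $a+H(p_0',q_0)\leq 0$, hence b).

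The super-solution half of a) is dual: touching $u_*$ from below is preserved when the normal slope is \emph{decreased}, so for $q_0\geq\pi^{+}(p_0',A(p_0'))$ I lower it to $\pi^{+}$ and the reduced inequality gives $a+A(p_0')\geq 0=a+F_A(p_0',q_0)$; for $q_0<\pi^{+}(p_0',A(p_0'))$ I would construct, using (\ref{lim2}) for $u_*$ and the interior super-solution property, a split test from below whose normal profile $\phi_0$ has slope exactly $\pi^{+}(p_0',A(p_0'))$, exactly as in \cite{im2}, the decreasing-branch range $q_0\leq\pi^{-}(p_0',A(p_0'))$ being handled by the interior super-solution inequality. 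The main obstacle I anticipate is the penalization step in b) and its super-solution counterpart: one must choose the normal penalization (respectively build $\phi_0$) so that the maximizer genuinely enters $\Omega$ and its normal slope converges to the prescribed value, and one must check that the $\mathcal{C}^{1}$ flattening introduces no spurious first-order terms, which is guaranteed precisely by $D'\gamma(X_0')=0$. Once the contact point is interior, the passage to the limit is routine by the coercivity and continuity of $H$.
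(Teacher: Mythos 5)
Your overall architecture (flattening the boundary with $\nabla'\psi^{N}(X_0')=0$, split test functions, the pinned slope $\pi^{+}(p_0',A(p_0'))$, monotonicity of $H^{-}$ and $F_A$) matches the paper's, but part b) --- which you correctly identify as the heart of the matter, and on which your case analysis in a) leans --- contains a fatal error. You claim that since $H\geq H^{-}$ it suffices to prove $a+H(p_0',q_0)\leq 0$ for every test function touching $u^{*}$ from above at a boundary point. That stronger statement is false. Since $x^{N}-\psi^{N}(X')\geq 0$ on $\bar\Omega$ and vanishes at $X_0$, for every $c>0$ the function $\varphi+c\,(x^{N}-\psi^{N}(X'))$ again touches $u^{*}$ from above at $(t_0,X_0)$; if your claim were provable it would yield $a+H(p_0',q_0+c)\leq 0$ for all $c>0$, contradicting the coercivity of $H$. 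Concretely, in the stationary one-dimensional case $\Omega=(0,1)$, $H(p)=|p|$, $u\equiv -1$ (an interior sub-solution satisfying (\ref{lim2})), the function $\varphi(x)=-1+qx$ with $q>1$ touches $u$ from above at $x=0$ and $u(0)+H(\varphi'(0))=q-1>0$, whereas $u(0)+H^{-}(\varphi'(0))=-1\leq 0$. Accordingly your penalization step must break down, and it does exactly where you anticipated trouble: in this example $u-\varphi+\delta d=(\delta-q)x$ attains its maximum at the boundary point for every $\delta<q$, so the maximizer never enters $\Omega$ and no interior inequality is available. (For a merely $\mathcal{C}^{1}$ boundary the distance $d$ is moreover not $\mathcal{C}^{1}$, but that is secondary.) The correct mechanism, which is the paper's Lemma \ref{pentescri}, gives the interior inequality only after the normal slope is \emph{lowered} by the critical slope: there exists a finite $\bar p\leq 0$ --- finiteness proved via (\ref{lim2}) together with a perturbation $\epsilon\,(x_\epsilon^{N}-\psi^{N}(X_\epsilon'))/(x^{N}-\psi^{N}(X'))$ that is singular on the boundary, not linear in the distance, which is what genuinely forces the contact point inside --- such that $a+H(p_0',q_0+\bar p)\leq 0$; then monotonicity gives $H^{-}(p_0',q_0)\leq H^{-}(p_0',q_0+\bar p)\leq H(p_0',q_0+\bar p)\leq -a$ and $A_0(p_0')\leq -a$, i.e.\ $a+F_{A_0}(p_0',q_0)\leq 0$. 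Your heuristic in a) for $q_0>\pi^{+}$ (``lower the slope until an interior contact appears'') is in substance this critical-slope lemma, but contact at the critical slope is not automatic: the paper needs the cutoff construction with $\Psi$, the parameters $\epsilon,\delta$, and the points furnished by (\ref{lim2}) to manufacture an actual interior maximum.

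A secondary but real gap concerns admissibility of your modified test functions. The reduced definition only accepts functions of the split form $\phi(t,X')+\phi_0(x^{N})$ with $\phi_0'(x_0^{N})=\pi^{+}(p_0',A(p_0'))$; adding $c\,(x^{N}-\psi^{N}(X'))$ to a \emph{general} $\mathcal{C}^{1}$ test function $\varphi$ adjusts the normal slope but does not produce a split function. The paper handles this by replacing $\phi$ with $\varphi(t,X',x^{N})=\phi(t,X',\psi^{N}(X'))-\phi_0(\psi^{N}(X'))+\phi_0(x^{N})$, which is split, and proving that it still dominates $u^{*}$ near $(t_0,X_0)$ by an integral comparison of $\partial_N\phi$ with $\phi_0'$ using intermediate slopes $\bar p<p_1<p_2$; some such construction is indispensable in both halves of your a). Finally, your remark that in the super-solution case the range $q_0\leq\pi^{-}(p_0',A(p_0'))$ is ``handled by the interior super-solution inequality'' is unjustified for the same reason as b): at a boundary contact point the interior inequality does not apply, and deriving $a+H(p_0',q_0)\geq 0$ there is precisely the content of Lemma \ref{pentescri2} (with the additional coercivity argument when $\bar p=+\infty$), not a given.
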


To show this proposition, let us first recall the lemma taken from  \cite{im,im2} of the critical slope for sub and super-solutions. 
To this purpose, let us define a local parametrization of the smooth domain.  

Let us define the function $\psi^{N}$ on a neighborhood of a fixed point $X_{0} \in \partial\Omega$, 
which gives the component along the inward normal vector $N_{X_{0}}$ of the projection on the boundary of a point in $\bar{\Omega}$ (see \ref{im8}).

\begin{defin}[Function $\psi^{N}$]
\label{defpsi}
Let $X_{0}$ be on $\partial\Omega$. As $\Omega$ is smooth with a boundary of class $\mathcal{C}^{1}$, it exists $r_{0}>0$, $\omega$ an open set of $T_{X_{0}}$ the tangent plane at $X_{0}$ and $\psi^{N} \in \mathcal{C}^{1}(\omega)$  such that $$\partial\Omega\cap\bar{B}_{r_{0}}(X_{0})=\left\{(X',x^{N})\in \omega\times\mathbb{R}, \quad x^{N}=\psi^{N}(X')\right\}.$$
\end{defin}  

\begin{figure}
\begin{center}
  \includegraphics[width=9.0cm]{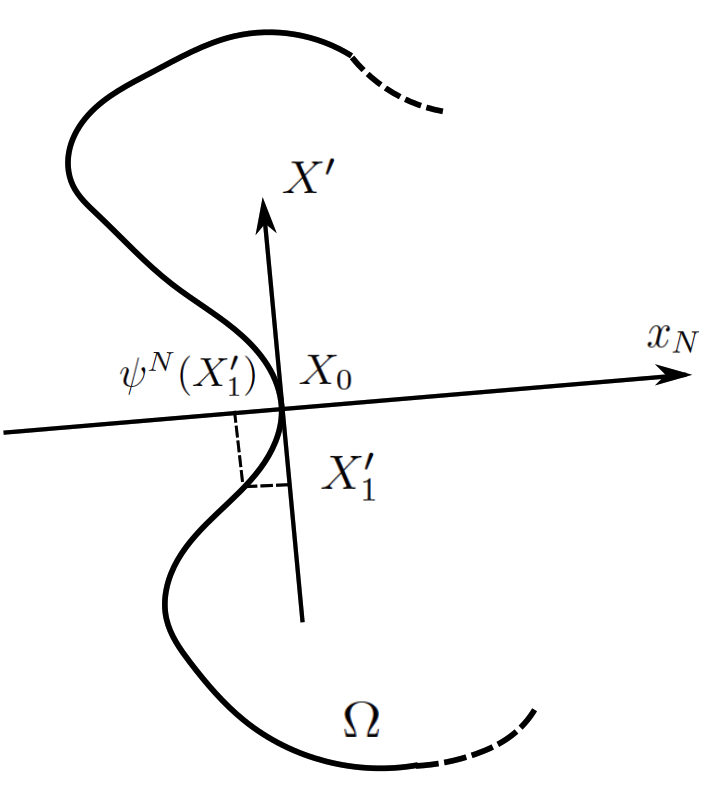}
  \caption{Function $\psi^{N}$ in Definition \ref{defpsi}}
  \label{im8}
  \end{center}
  \end{figure} 

\begin{Rmk}We have
$\nabla'\psi^{N}(X'_{0})=0.$
\end{Rmk}

\begin{Lm}[Critical slope for sub-solution]
\label{pentescri}
Let $u$ be an upper semi-continuous sub-solution of $u_{t}+H(\nabla u) =0$ in $(0,T)\times\Omega$ which satisfies (\ref{lim2})
 and let $\varphi$ be a test function touching $u$ from above at some point $(t_{0},X_{0})$ where $t_{0}\in (0,T)$ and $X_{0}=(X_{0}',x_{0}^{N}) \in \partial\Omega.$ Let $r_{0}$ be given by the definition \ref{defpsi}. Then the critical slope given by 
$$\bar{p}= \inf\left\{ \begin{array}{c}
p\in \mathbb{R}: \exists r \in (0,r_{0}], \quad \varphi(t,X)+p(x^{N}-\psi^{N}(X')) \geq u(t,X),  \\
\forall (t,X)\in (t_{0}-r,t_{0}+r)\times \bar{\Omega}\cap \bar{B}_{r}(X_{0}) 
\end{array}\right\}$$ 
is finite, satisfies $\bar{p} \leq 0$ and 
$$\varphi_{t}(t_{0},X_{0})+H(\nabla'\varphi(t_{0},X_{0}),\partial_{N}\varphi(t_{0},X_{0})+\bar{p})\leq 0.$$
\end{Lm}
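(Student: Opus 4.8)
The plan is to read the critical slope through the local change of coordinates $d(X):=x^{N}-\psi^{N}(X')$, which measures the signed normal distance to $\partial\Omega$. Since $N_{X_{0}}$ is the inward normal, Definition \ref{defpsi} gives $d\geq 0$ on $\bar{\Omega}\cap\bar{B}_{r_{0}}(X_{0})$ with $d=0$ exactly on $\partial\Omega$. The first observation is that $p(x^{N}-\psi^{N}(X'))\geq 0$ whenever $p\geq 0$, so adding it to $\varphi$ only strengthens the inequality $\varphi\geq u$ coming from the touching; hence every $p\geq 0$ lies in the defining set and $\bar{p}\leq 0$. The same monotonicity shows the set is an up-set, so it is a half-line and $\bar p$ is its left endpoint. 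I would then identify this endpoint explicitly: unravelling "$\varphi+p\,d\geq u$ on a small neighborhood'' as "$p\geq (u-\varphi)/d$ at interior points'', one gets
\[
\bar{p}=\limsup_{(s,Y)\to(t_{0},X_{0}),\,Y\in\Omega}\frac{u(s,Y)-\varphi(s,Y)}{y^{N}-\psi^{N}(Y')}\leq 0 .
\]
Finiteness ($\bar{p}>-\infty$) is where coercivity enters: a subsolution of a coercive Hamilton--Jacobi equation is locally Lipschitz in space, so along the normal approach $Y=X_{0}+d\,N_{X_{0}}$ at $t=t_{0}$ the numerator above is $O(d)$ by the Lipschitz bounds on $u$ and on $\varphi\in\mathcal{C}^{1}$, keeping the quotient bounded below; thus $\bar p$ is finite.

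The heart of the proof is the viscosity inequality, obtained by approaching $\bar p$ from below. For each slope $p<\bar{p}$ the function $\varphi_{p}:=\varphi+p\,d$ is \emph{not} admissible on any neighborhood, so there are points with $u-\varphi_{p}>0$; since $d=0$ on $\partial\Omega$ forces $u-\varphi_{p}=u-\varphi\leq 0$ there, every such violation point lies in the open domain. Maximizing the upper semicontinuous function $u-\varphi_{p}$ over a small closed neighborhood of $(t_{0},X_{0})$ therefore yields a \emph{positive} maximum attained at an interior point $(\hat t,\hat X)\in(0,T)\times\Omega$; there $\varphi_{p}$ plus a constant touches $u$ from above, and $\varphi_{p}\in\mathcal{C}^{1}((0,T)\times\bar\Omega)$ because $\psi^{N}\in\mathcal{C}^{1}(\omega)$, so the interior subsolution property gives $\partial_{t}\varphi_{p}(\hat t,\hat X)+H(\nabla\varphi_{p}(\hat t,\hat X))\leq 0$. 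Taking a sequence $p_{n}\uparrow\bar p$ together with shrinking neighborhoods, the maximizers $(\hat t_{n},\hat X_{n})$ converge to $(t_{0},X_{0})$; computing $\nabla\varphi_{p}=\nabla\varphi+p\,\nabla d$ and using the Remark $\nabla'\psi^{N}(X_{0}')=0$, so that $\nabla d(X_{0})=N_{X_{0}}$, the continuity of $H$, $\varphi$ and $\psi^{N}$ lets me pass to the limit and obtain
\[
\varphi_{t}(t_{0},X_{0})+H\big(\nabla'\varphi(t_{0},X_{0}),\,\partial_{N}\varphi(t_{0},X_{0})+\bar{p}\big)\leq 0 ,
\]
which is the claimed inequality; combined with $\bar p\le 0$ this completes the statement.

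The main obstacle I expect is the localization of the interior maximum: a priori the maximum of $u-\varphi_{p}$ over the closed set $[t_{0}-r,t_{0}+r]\times(\bar\Omega\cap\bar B_{r}(X_{0}))$ could sit on the artificial lateral sphere $\partial B_{r}(X_{0})$ or at the endpoints $t_{0}\pm r$, where the subsolution inequality is unavailable. Handling this cleanly is the delicate point: I would choose the interior near-touching point (supplied by condition (\ref{lim2}), which guarantees interior points where $u$ recovers its boundary value and hence where $u-\varphi_{\bar p}\to 0^{+}$) well inside the ball, and either add a small quadratic penalization that repels the maximizer from the lateral and temporal boundaries, or tune the radii so that the positivity of the maximum confines it near that interior point. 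The remaining care is purely in the limit passage through the curved boundary, which is controlled by the $\mathcal{C}^{1}$ regularity of $\psi^{N}$ and the normalization $\nabla'\psi^{N}(X_{0}')=0$.
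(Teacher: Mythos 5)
Your second half (the viscosity inequality at the corrected slope) follows the paper's skeleton and is essentially right in outline, but the first half contains a genuine gap: the finiteness of $\bar{p}$. You invoke ``a subsolution of a coercive Hamilton--Jacobi equation is locally Lipschitz in space'' in the \emph{evolution} setting, and this is exactly what is not available here. For the stationary equation $u+H(\nabla u)\leq 0$ the argument works (and is precisely the paper's ``simpler proof'' in Section \ref{app:app1}: a touching test function $\phi$ gives $H(\nabla\phi)\leq -u\leq M$, coercivity bounds $|\nabla\phi|$, and Ishii's Proposition 1.14 plus (\ref{lim2}) give a Lipschitz bound up to $\bar{\Omega}$). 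But for $u_{t}+H(\nabla u)\leq 0$ the touching inequality only yields $H(\nabla\phi)\leq -\phi_{t}$, and $\phi_{t}$ is not controlled: $u$ is merely upper semicontinuous, has no supersolution property, and may be wildly irregular in time, so no spatial gradient bound follows from coercivity alone. The paper avoids this entirely and proves finiteness by contradiction with a singular perturbation: assuming $p_{n}\to-\infty$, it adds to $\varphi+p_{n}(x^{N}-\psi^{N}(X'))$ the term $\epsilon\,(x_{\epsilon}^{N}-\psi^{N}(X'_{\epsilon}))/(x^{N}-\psi^{N}(X'))$, which blows up on $\partial\Omega$ and forces the minimizer of $\Psi-u$ into the interior; here (\ref{lim2}) enters not through regularity but by supplying interior points $(t_{\epsilon},X_{\epsilon})\to(t_{0},X_{0})$ with $u(t_{\epsilon},X_{\epsilon})\to u(t_{0},X_{0})$, which make the perturbation small at a competitor point. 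Passing to the limit, coercivity of $H$ first forces the extra normal slope $\alpha\geq 0$ to be finite and then excludes $p_{n}\to-\infty$. Your proposal, by contrast, rests the entire finiteness claim on an unproven regularity statement, and this is not a cosmetic shortcut: it is the one place the paper deploys its most technical construction.

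On the second half, your structure (inadmissible slopes $p_{n}\uparrow\bar{p}$ produce interior violation points, since $u\leq\varphi$ forces $u-\varphi-p\,d\leq 0$ on $\partial\Omega$; maximize and use the interior subsolution inequality; pass to the limit using $\nabla'\psi^{N}(X_{0}')=0$) matches the paper. You correctly flag the localization of the maximum as the delicate point, but your fallback ``tune the radii so that the positivity of the maximum confines it'' does not work by itself: on the lateral sphere $\partial B_{\delta}(X_{0})$ the quantity $u-\varphi-(\bar{p}-\epsilon)d$ can be positive (of size up to roughly $2\epsilon\,d$), so positivity alone does not exclude maximizers there. What makes the paper's localization go through is the \emph{quantitative pairing} of the inadmissible slope $\bar{p}-\epsilon$ with the admissible slope $\bar{p}+\epsilon$: the bound $u\leq\varphi+(\bar{p}+\epsilon)d$ on the $\delta$-neighborhood shows the deficit on the sphere is at most $2\epsilon d\leq 4\epsilon\delta$, which is then dominated by the smooth bump $4\epsilon\delta\,\Psi((X-X_{0})/\delta)$ that vanishes near the violation point in $B_{\delta/2}(X_{0})$. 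Your ``small quadratic penalization'' idea can be made to work, but only with this same two-sided use of the definition of $\bar{p}$, which your sketch never mentions. (Minor points: your claim $\bar{p}\leq 0$ via admissibility of all $p\geq 0$ is correct and matches the paper; your attribution of the interior violation point to (\ref{lim2}) is slightly off --- it follows from the definition of $\bar{p}$ together with $d=0$ on $\partial\Omega$, no continuity condition needed.)
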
 

\begin{proof}
Following the same arguments than C. Imbert and R. Monneau in \cite{im}, let us show that $\bar{p}$ is finite in this case. 

Assume $\bar{p}=-\infty$. This implies that there exists $p_{n}\rightarrow-\infty$ and $r_{n}>0$ such that for all $(t,X)\in D_{n}=(t_{0}-r_{n},t_{0}+r_{n})\times\bar{\Omega}\cap \bar{B}_{r_{n}}(X_{0}),$
$$u(t,X)\leq \varphi(t,X)+p_{n}(x^{N}-\psi^{N}(X')).$$
Remark that, replacing $\varphi$ with $\varphi+(t-t_{0})^{2}+||X-X_{0}||^{2}$ if necessary, we can assume that if $(t,X)\neq(t_{0},X_{0}),$ then $u(t,X)<\varphi(t,X)+p_{n}(x^{N}-\psi^{N}(X'))$. In particular, there exists $\delta_{n}>0$ such that on $(t_{0}-r_{n},t_{0}+r_{n})\times \partial B_{r_{n}}(X_{0})\cap\bar{\Omega},$ we have
\begin{equation}
\label{imp}
u(t,X)+\delta_{n}\leq \varphi(t,X)+p_{n}(x^{N}-\psi^{N}(X')).
\end{equation}

Since $u$ satifies (\ref{lim2}), there exists $(t_{\epsilon},X_{\epsilon})\rightarrow (t_{0},X_{0})$ such that $X_{\epsilon}\notin \partial \Omega$ and $u(t_{0},X_{0})=\lim_{\epsilon\rightarrow 0} u(t_{\epsilon},X_{\epsilon}).$ 
Let us introduce now the following perturbed test function
$$\Psi(t,X)=\varphi(t,X)+p_{n}(x^{N}-\psi^{N}(X'))+\epsilon\frac{x_{\epsilon}^{N}-\psi^{N}(X'_{\epsilon})}{x^{N}-\psi^{N}(X')}.$$
Let $(s_{\epsilon},Y_{\epsilon})$ realizing the infimum of $\Psi-u$ in $\bar{D}_{n}$. Using the definition of $\Psi$, we see that $Y_{\epsilon}\notin \partial \Omega$. In particular,

\begin{equation}
\label{ineq}
\varphi(s_{\epsilon},Y_{\epsilon})+p_{n}(y^{N}_{\epsilon}-\psi^{N}(Y'_{\epsilon}))-u(s_{\epsilon},Y_{\epsilon})\leq \Psi(s_{\epsilon},Y_{\epsilon})-u(s_{\epsilon},Y_{\epsilon})\leq \Psi(t_{\epsilon},X_{\epsilon})-u(t_{\epsilon},X_{\epsilon}). 
\end{equation}

The last term of the inequality (\ref{ineq})  goes to $0$ when $\epsilon$ goes to $0$ and using (\ref{imp}), this implies that $(s_{\epsilon},Y_{\epsilon}) \rightarrow (t_{0},X_{0}).$  Since $\Psi$ is a test function at $(s_{\epsilon},Y_{\epsilon})$, we have 

\begin{multline*}
 \varphi_{t}(s_{\epsilon},Y_{\epsilon})+H\left( \nabla\varphi(s_{\epsilon},Y_{\epsilon}) + \left(\begin{array}{c}
 -p_{n}\nabla'\psi^{N}(Y'_{\epsilon})  \\ 
p_{n}
 \end{array} \right)\right.  \\
 \left. -\epsilon\frac{x_{\epsilon}^{N}-\psi^{N}(X'_{\epsilon})}{(y_{\epsilon}^{N}-\psi^{N}(Y'_{\epsilon}))^{2}}\left(\begin{array}{c}
 -\nabla'\psi^{N}(Y'_{\epsilon})  \\ 
1 \end{array} \right) \right) \leq 0.
\end{multline*} 
  
  We can pass to the limit in the viscosity inequality and get 
  $$ \varphi_{t}(t_{0},X_{0})+H\left(  \nabla\varphi(t_{0},X_{0}) +\left(\begin{array}{c}
 0  \\ 
p_{n}
 \end{array} \right)-\alpha\left(\begin{array}{c}
 0  \\ 
1
 \end{array} \right) \right) \leq 0,
  $$
  where $$\alpha=\limsup_{\epsilon\rightarrow 0} \quad \epsilon\frac{x_{\epsilon}^{N}-\psi^{N}(X'_{\epsilon})}{(y_{\epsilon}^{N}-\psi^{N}(Y'_{\epsilon}))^{2}} \in [0,+\infty].$$
  
The previous inequality and the fact that $H$ is coercive implies that $\alpha$ is finite, so that $p_{n}$ cannot go to $-\infty$ as $\varphi_{t}(t_{0},X_{0})$ and $\nabla\varphi(t_{0},X_{0})$ are fixed which gives a contradiction. So $\bar{p}$ is finite.

~\\
Now let us prove the second part of the lemma.
By definition of $\bar{p}$, for all $\epsilon > 0$ small enough, it exists $\delta=\delta(\epsilon)\in(0,\epsilon)$ such that for all $(t,X)\in (t_{0}-\delta,t_{0}+\delta)\times\bar{B}_{\delta}(X_{0})\cap\bar{\Omega},$
$$u(t,X)\leq \varphi(t,X)+(\bar{p}+\epsilon)(x^{N}-\psi^{N}(X')),$$
and it exists $(t_{\epsilon},X_{\epsilon}) \in (t_{0}-\frac{\delta}{2},t_{0}+\frac{\delta}{2})\times B_{\frac{\delta}{2}}(X_{0})$ such that $X_{\epsilon}\notin \partial \Omega$ and
$$u(t_{\epsilon},X_{\epsilon})>\varphi(t_{\epsilon},X_{\epsilon})+(\bar{p}-\epsilon)(x_{\epsilon}^{N}-\psi^{N}(X_{\epsilon}')).$$
Let $\Psi:\mathbb{R}^{d+1}\rightarrow [0,1]$ be a function of class $\mathcal{C}^{\infty}$ such that
$$\Psi=\left\{ \begin{array}{ll}
0 & \mbox{ in } B_{\frac{1}{2}}(0)\\
1 & \mbox{ outside } B_{1}(0). 
\end{array}\right.$$ 
 We define $$\Phi(t,X)=\varphi(t,X)+4\epsilon\delta\Psi\left(\frac{X-X_{0}}{\delta}\right)+(\bar{p}-\epsilon)(x^{N}-\psi^{N}(X')).$$
 So we have on $(t_{0}-\delta,t_{0}+\delta)\times\partial(B_{\delta}(X_{0})\cap\bar{\Omega}),$
 $$ \Phi(t,X) =\left\{ \begin{array}{ll} 
 \varphi(t,X)+4\epsilon\delta+(\bar{p}-\epsilon)(x^{N}-\psi^{N}(X')) & \mbox{ if } X\in \partial B_{\delta}(X_{0}) \\
 \varphi(t,X)+4\epsilon\delta \Psi\left(\frac{X-X_{0}}{\delta}\right) &\mbox{ if } X\in \partial \Omega,
  \end{array}\right.$$
so in all cases, on $(t_{0}-\delta,t_{0}+\delta)\times\partial(B_{\delta}(X_{0})\cap\bar{\Omega}),$ we have
  $$\Phi(t,X)\geq u(t,X).$$
 We also have 
 $$\Phi(t_{0},X_{0})=\varphi(t_{0},X_{0})=u(t_{0},X_{0}),$$
 and 
 $$\Phi(t_{\epsilon},X_{\epsilon})=\varphi(t_{\epsilon},X_{\epsilon})+(\bar{p}-\epsilon)(x_{\epsilon}^{N}-\psi^{N}(X_{\epsilon}')) < u(t_{\epsilon},X_{\epsilon}).$$
 So $u-\Phi$ reaches his maximum in the interior of $(t_{0}-\delta,t_{0}+\delta)\times B_{\delta}(X_{0})\cap\bar{\Omega}$ at some point $(\bar{t}_{\epsilon},\bar{X_{\epsilon}}),$ so $\Phi$ is a test function and 
 $$\Phi_{t}(\bar{t}_{\epsilon},\bar{X_{\epsilon}})+H(\nabla\Phi(\bar{t}_{\epsilon},\bar{X_{\epsilon}})) \leq 0,$$
which gives
 $$\varphi_{t}(\bar{t}_{\epsilon},\bar{X_{\epsilon}})+H\left(\nabla\varphi(\bar{t}_{\epsilon},\bar{X_{\epsilon}})+4\epsilon\nabla\Psi\left(\frac{X_{\epsilon}-X_{0}}{\delta}\right)+\left(\begin{array}{c}
 -(\bar{p}-\epsilon)\nabla'\psi^{N}(X'_{\epsilon}) \\ 
\bar{p}-\epsilon
 \end{array} \right) \right) \leq 0.$$
 When $\epsilon$ goes to $0$, we deduce, as $\nabla'\psi^{N}(X'_{0})=0,$
 $$\varphi_{t}(t_{0},X_{0})+H(\nabla'\varphi(t_{0},X_{0}),\partial_{N}\varphi(t_{0},X_{0})+\bar{p}) \leq 0.$$ 
 \end{proof}

For the super-solutions, we have a similar lemma with a similar proof 
where the fact that the critical slope is finite, is now an hypothesis and not a conclusion.

\begin{Lm}[Critical slope for super-solution]
\label{pentescri2}
Let $u$ be a lower semi-continuous sub-solution of $u_{t}+H(\nabla u) =0$ in $ (0,T)\times\Omega$ 
 and let $\varphi$ be a test function touching $u$ from below at some point $(t_{0},X_{0})$ where $t_{0}\in (0,T)$ and $X_{0}=(X_{0}',x_{0}^{N}) \in \partial\Omega.$ Let $r_{0}$ be given by the definition \ref{defpsi}. If the the critical slope given by 
$$\bar{p}= \sup\left\{ \begin{array}{c}
p\in \mathbb{R}: \exists r \in (0,r_{0}], \quad \varphi(t,X)+p(x^{N}-\psi^{N}(X')) \leq u(t,X),  \\
\forall (t,X)\in (t_{0}-r,t_{0}+r)\times \bar{\Omega}\cap \bar{B}_{r}(X_{0}) 
\end{array}\right\}$$
is finite, then it satisfies $\bar{p} \geq 0$ and we have 
$$\varphi_{t}(t_{0},X_{0})+H(\nabla'\varphi(t_{0},X_{0}),\partial_{N}\varphi(t_{0},X_{0})+\bar{p})\geq 0.$$
\end{Lm}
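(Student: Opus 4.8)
The plan is to mirror the proof of Lemma \ref{pentescri}, reversing every inequality, with one simplification: since finiteness of $\bar p$ is now a hypothesis, the first part of that proof — the coercivity argument excluding $\bar p = \pm\infty$ — is dropped, and only the analogue of its second part is needed. (Here $u$ must be read as a \emph{super}-solution, consistent with the sentence preceding the statement and with the $\geq 0$ conclusion.) First I would record the easy sign. Because $\varphi$ touches $u$ from below at $(t_0,X_0)$, we have $\varphi\leq u$ on a neighborhood, and since every $X\in\bar\Omega$ close to $X_0$ satisfies $x^N-\psi^N(X')\geq 0$ (the inward normal points into $\Omega$), the choice $p=0$ belongs to the defining set; moreover that set is downward closed in $p$ by the same sign, so $\bar p=\sup\geq 0$.

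For the viscosity inequality I would exploit the two sides of the supremum. On one hand $\bar p-\epsilon$ lies in the set, so for some $\delta=\delta(\epsilon)\in(0,\epsilon)$ one has $u(t,X)\geq \varphi(t,X)+(\bar p-\epsilon)(x^N-\psi^N(X'))$ on $(t_0-\delta,t_0+\delta)\times\bar B_\delta(X_0)\cap\bar\Omega$. On the other hand $\bar p+\epsilon$ is \emph{not} in the set, so taking $r=\delta/2$ there is a point $(t_\epsilon,X_\epsilon)$ in the smaller neighborhood where $\varphi(t_\epsilon,X_\epsilon)+(\bar p+\epsilon)(x_\epsilon^N-\psi^N(X_\epsilon'))>u(t_\epsilon,X_\epsilon)$; on $\partial\Omega$ the tilt term vanishes and this would contradict $\varphi\leq u$, so necessarily $X_\epsilon\notin\partial\Omega$. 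I would then form, with $\Psi$ the same cutoff as before,
$$\Phi(t,X)=\varphi(t,X)-4\epsilon\delta\,\Psi\!\left(\tfrac{X-X_0}{\delta}\right)+(\bar p+\epsilon)(x^N-\psi^N(X')).$$

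The point of this construction is to check that $u-\Phi$ attains an interior minimum. On the spherical part of the relative boundary $\Psi\equiv 1$, and the lower bound above together with $x^N-\psi^N(X')\leq\delta+o(\delta)$ (a first-order Taylor expansion using $\nabla'\psi^N(X_0')=0$) gives $u-\Phi\geq 2\epsilon\delta-o(\epsilon\delta)>0$; on the part lying in $\partial\Omega$ the tilt term vanishes and $-4\epsilon\delta\Psi\leq 0$, so $\Phi\leq\varphi\leq u$; while at $(t_\epsilon,X_\epsilon)\in B_{\delta/2}(X_0)$ the cutoff vanishes and $u-\Phi<0$. Hence the minimum of $u-\Phi$ over $(t_0-\delta,t_0+\delta)\times\bar B_\delta(X_0)\cap\bar\Omega$ is reached at an interior point $(\bar t_\epsilon,\bar X_\epsilon)$, where $\Phi$ is an admissible test function from below.

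Applying the super-solution inequality there yields $\varphi_t(\bar t_\epsilon,\bar X_\epsilon)+H(\nabla\Phi(\bar t_\epsilon,\bar X_\epsilon))\geq 0$, where $\nabla\Phi=\nabla\varphi-4\epsilon\nabla\Psi\!\left(\tfrac{\bar X_\epsilon-X_0}{\delta}\right)+(\bar p+\epsilon)\left(-\nabla'\psi^N(\bar X_\epsilon'),\,1\right)$. As $\epsilon\to 0$ the shrinking neighborhood forces $(\bar t_\epsilon,\bar X_\epsilon)\to(t_0,X_0)$; the cutoff contribution is $O(\epsilon)$ since $\nabla\Psi$ is bounded, and $\nabla'\psi^N(\bar X_\epsilon')\to\nabla'\psi^N(X_0')=0$, so passing to the limit by continuity of $H$ gives $\varphi_t(t_0,X_0)+H(\nabla'\varphi(t_0,X_0),\partial_N\varphi(t_0,X_0)+\bar p)\geq 0$, as claimed. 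The main obstacle is purely bookkeeping: fixing the sign of the cutoff so that $u-\Phi\geq 0$ holds simultaneously on both pieces of the relative boundary, and placing $(t_\epsilon,X_\epsilon)$ inside $B_{\delta/2}(X_0)$ so that the interior minimum is genuinely strict; coercivity of $H$, decisive in Lemma \ref{pentescri}, is not used here because finiteness of $\bar p$ is assumed.
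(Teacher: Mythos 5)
Your proof is correct and is essentially the paper's own argument: the paper treats Lemma \ref{pentescri2} by a one-line remark that it follows from the second half of the proof of Lemma \ref{pentescri} with inequalities reversed and the finiteness of $\bar{p}$ taken as a hypothesis, and your construction of $\Phi$ with the cutoff sign flipped and tilt $\bar{p}+\epsilon$ carries that out faithfully. The extra details you supply --- the sign argument giving $\bar{p}\geq 0$ and the Taylor bound $x^{N}-\psi^{N}(X')\leq \delta+o(\delta)$ using $\nabla'\psi^{N}(X_{0}')=0$ on the spherical part of the boundary --- are precisely the bookkeeping the paper leaves implicit.
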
 

Now let us prove \ref{defequiv}.

\begin{proof}[Proof of the proposition \ref{defequiv}]
We first prove the results concerning sub-solutions and then turn to super-solution.
\vskip 0.5cm
\textbf{Sub-solution.}
Let $\phi$ be a test function touching $u^{*}$ from above at $(t_{0},X_{0})\in (0,T)\times\partial\Omega$ and let $\lambda=-\phi_{t}(t_{0},X_{0})$. Let $p'=\nabla'\phi(t_{0},X_{0})$ and $p_{N}=\partial_{N}\phi(t_{0},X_{0}).$ We want to show that 
\begin{equation}
\label{eqsous}
F_{A}(p',p_{N})\leq \lambda.
\end{equation} Notice that by lemma \ref{pentescri}, it exists $\bar{p}\leq 0$ such that 
$$H(p',p_{N}+\bar{p})\leq \lambda,$$ so $A_{0}(p')\leq \lambda$ and as $H^{-}$ is non-increasing, we have
$$\begin{array}{lll}
F_{A}(p',p_{N}) & = & \max(A(p'),H^{-}(p',p_{N}))\\
 & \leq & \max(A(p'),H^{-}(p',p_{N}+\bar{p}))\\
 & \leq&  \max(A(p'),H(p',p_{N}+\bar{p}))\\
 & \leq&  \max(A(p'),\lambda).
\end{array}$$
If $A=A_{0}$, then  $F_{A}(p',p_{N})\leq \lambda$ so b) is proven. 
Assume now that $F_{A}(p',p_{N}) > \lambda$, by contradiction, then we have $A_{0}(p')\leq \lambda < A(p')$
and as $$H(p',p_{N}+\bar{p})\leq \lambda<A(p'),$$ then 
$$p_{N}+\bar{p} < \pi^{+}(p',A(p')).$$
Let us consider the modified test function
$$\varphi(t,X',x^{N})=\phi(t,X',\psi^{N}(X'))-\phi_{0}(\psi^{N}(X'))+\phi_{0}(x^{N}),$$
with $\phi'_{0}(x_{0}^{N})=\pi^{+}(p',A(p')).$
We have 
$$\varphi(t_{0},X_{0})=\phi(t_{0},X_{0})=u^{*}(t_{0},X_{0}) .$$
Let us show that
\begin{equation}
\label{exp10}
\varphi(t,X',x^{N})\geq u^{*}(t,X',x^{N}), 
\end{equation}
on a neighborhood of $(t_{0},X_{0}).$
We have
$$\partial_{N}\phi(t_{0},X_{0})+\bar{p}<\phi'_{0}(x_{0}^{N}),$$
so it exists $p_{1}$ and $p_{2}$ such that $\bar{p}<p_{1}<p_{2}$ and which satisfy
$$\partial_{N}\phi(t_{0},X_{0})+p_{i}<\phi'_{0}(x_{0}^{N}), \quad \forall i \in \{1,2\}.$$
As $\partial_{N}\phi$ and $\phi'_{0}$ are continuous, on a neighborhood of $(t_{0},X_{0}),$ we have
$$\partial_{N}\phi(t,X)+p_{i}<\phi'_{0}(x^{N}), \quad \forall i \in \{1,2\}.$$
So we have
 $$\begin{array}{lll}
\phi(t,X',x^{N}) &=& \phi(t,X',\psi^{N}(X'))+\displaystyle\int_{\psi^{N}(X')}^{x^{N}}\partial_{N}\phi(t,X',s)\mathrm{d}s \\
 & = & \varphi(t,X',x^{N})+\phi_{0}(\psi^{N}(X'))-\phi_{0}(x^{N})+\displaystyle\int_{\psi^{N}(X')}^{x^{N}}\partial_{N}\phi(t,X',s)\mathrm{d}s\\
  & = & \varphi(t,X',x^{N})+\displaystyle\int_{\psi^{N}(X')}^{x^{N}}(\partial_{N}\phi(t,X',s)-\phi'_{0}(s))\mathrm{d}s \\
  & \leq & \varphi(t,X',x^{N}) -p_{2}(x^{N}-\psi^{N}(X')),
\end{array}$$
and by definition of $\bar{p},$ it exists a neighborhood $(t_{0}-r,t_{0}+r)\times\bar{\Omega}\cap B_{r}(X_{0})$ of $(t_{0},X_{0}),$ for some $r>0$ such that 
$$\begin{array}{rll}
u^{*}(t,X',x^{N}) & \leq & \phi(t,X',x^{N})+p_{1}(x^{N}-\psi^{N}(X'))
    \\                   & \leq & \varphi(t,X',x^{N})+(p_{1}-p_{2})(x^{N}-\psi^{N}(X')),
\end{array}$$
so we get (\ref{exp10}).
~\\
By the definition of reduced sub-solution and using that
$$H^{-}(p',\pi^{+}(p',A(p')))=A_{0}(p')\leq A(p'),$$
we deduce that
$$A(p')=F_{A}(p',\pi^{+}(p',A(p')))\leq \lambda.$$
which gives a contradiction. Therefore (\ref{eqsous}) holds true. 
Let us prove now the super-solution case.
\vskip 0.5cm
\textbf{Super-solution.} Let $\phi$ be a test function touching $u_{*}$ from below at $(t_{0},X_{0})\in (0,T)\times\partial\Omega.$ Let $\lambda=-\phi_{t}(t_{0},X_{0}).$
We want to show that 
\begin{equation}
\label{sursol}
F_{A}(\phi_{x})\geq\lambda.
\end{equation}
By the lemma \ref{pentescri2}, if $\bar{p}$ is finite, then $\bar{p} \geq 0$ and
\begin{equation}
\label{exp1}
H(p',p_{N}+\bar{p})\geq\lambda. 
\end{equation}
If $\bar{p}=+\infty$ then as $H$ is coercive, the inequality (\ref{exp1}) is true for some $\tilde{p}$ big enough. 
To simplify the notations,  $\bar{p}$ will denote the real number satisfying the inequality (\ref{exp1}) in the first or the second case. 
Note that (\ref{sursol}) holds true if $$\lambda \leq A(p') \quad \mbox{ or } \quad H^{-}(p',p_{N}+\bar{p})=H(p',p_{N}+\bar{p}).$$
Assume by contradiction that the inequality (\ref{sursol}) is wrong. 
Then we have, 
\begin{equation}
\label{exp2}
A_{0}(p')\leq A(p)<\lambda\leq H^{+}(p',p_{N}+\bar{p}).
\end{equation}
As $H^{-}(p',p_{N}+\bar{p})<H(p',p_{N}+\bar{p}),$ we deduce in particular that
$$p_{N}+\bar{p}>\pi^{+}(p',A(p'))=\phi'_{0}(x_{0}^{N}),$$
because if by contradiction, 
$$p_{N}+\bar{p}\leq\pi^{+}(p',A(p')),$$
holds true, then
$$\lambda\leq H(p',p_{N}+\bar{p})=H^{+}(p',p_{N}+\bar{p})\leq H^{+}(p',\pi^{+}(p',A(p'))=A(p'),$$
which is a contradiction with (\ref{exp2}).
As before, with the same proof that the sub-solution case, we show that the function
$$\varphi(t,X',x^{N})=\phi(t,X',\psi^{N}(X'))-\phi_{0}(\psi^{N}(X'))+\phi_{0}(x^{N}),$$
is a test function touching $u$ from below at $(t_{0},X_{0}).$ By definition of reduced sub-solution,
$$A(p')=F_{A}(p',\pi^{+}(p',A(p')))\geq \lambda,$$
which gives a contradiction with (\ref{exp2}). Therefore (\ref{sursol}) holds true. 
\end{proof}

Now we can prove the reformulation of state constraints theorem.

\section{Proof of the theorem \ref{mainth}}
The proposition \ref{defequiv} about the equivalence of definition allows to reduce the set of test functions. Now let us prove the theorem using the same ideas as in \cite{im}.

\begin{proof}[Proof of the theorem \ref{mainth}]
We do the proof in three steps.
\newline
\textbf{1st step:}
Let us prove that
$$u_{t}+H(\nabla u) \leq 0 \quad \mbox{ in } (0,T)\times\Omega,$$
implies
$$u_{t}+H^{-}(\nabla u) \leq 0\quad \mbox{ on } (0,T)\times\partial\Omega.$$
Using b) of the proposition \ref{defequiv}, $u$ is a $A_{0}$-flux limited sub-solution, so
$$u_{t}+F_{A_{0}}(\nabla u) \leq 0 \quad \mbox{ on } (0,T)\times\partial\Omega.$$ 
As $F_{A_{0}}(\nabla u)=H^{-}(\nabla u)$, we have $$u_{t}+H^{-}(\nabla u)\leq 0\quad \mbox{ on } (0,T)\times\partial\Omega,.$$
\textbf{2nd step:}
Let us prove that
$$u_{t}+H(\nabla u) \geq 0\quad \mbox{ in } (0,T)\times\bar{\Omega},$$
implies
$$u_{t}+H^{-}(\nabla u) \geq 0 \quad\mbox{ on } (0,T)\times\partial\Omega.$$
Let $\varphi$ be a test function touching $u_{*}$ from below at $(t_{0},X_{0})\in (0,T)\times\partial\Omega.$
Using the proposition \ref{defequiv}, we assume that
$$\varphi(t,X)=\phi(t,X')+\phi_{0}(x^{N}),$$
where $$\phi\in \mathcal{C}^{1}((0,T)\times\mathbb{R}^{d}),\quad \nabla'\phi(t_{0},X_{0}')=p'_{0},$$
and $$\phi_{0}\in\mathcal{C}^{1}(\mathbb{R}), \quad \phi'_{0}(x_{0}^{N})=\pi^{+}(p'_{0},A_{0}(p'_{0})).$$
As $$H(\nabla \varphi)=H^{+}(\nabla \varphi)=A_{0}(p'_{0})=H^{-}(\nabla\varphi),$$
and by hypothesis, we have $\varphi_{t}+H(\nabla \varphi)\geq 0$,
we deduce $$\varphi_{t}+H^{-}(\nabla \varphi)\geq 0.$$
\textbf{3rd step:}
The reverse come from the fact that $H^{-}\leq H.$
\end{proof}

\section{Simpler proofs in particular cases}\label{app:app1}

Let us give some direct proof in some cases without using the equivalence of definition which reduces the set of test function.

\subsection{Multi-dimensional case for super-solutions}
Let us give a direct proof of the following theorem which is a part of the general reformulation of state constraints theorem.

\begin{theo}
If $u$ satisfies 
$$u_{t}+H(\nabla u) \geq 0\quad \mbox{ on } (0,T)\times\partial\Omega$$
then 
$$u_{t}+H^{-}(\nabla u) \geq 0\quad  \mbox{ on } (0,T)\times\partial\Omega.$$
\end{theo}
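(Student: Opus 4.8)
The plan is to argue directly from the definition of viscosity super-solution, using only the local graph parametrization $\psi^{N}$ from Definition \ref{defpsi} and never invoking the critical-slope machinery of Lemma \ref{pentescri2}. Fix a test function $\varphi\in\mathcal{C}^{1}((0,T)\times\bar{\Omega})$ touching $u_{*}$ from below at a boundary point $(t_{0},X_{0})\in(0,T)\times\partial\Omega$, and set $\lambda=-\varphi_{t}(t_{0},X_{0})$, $p'=\nabla'\varphi(t_{0},X_{0})$, $p_{N}=\partial_{N}\varphi(t_{0},X_{0})$. The hypothesis $u_{t}+H(\nabla u)\geq 0$ on the boundary applied to $\varphi$ gives $H(p',p_{N})\geq\lambda$, and the goal is to upgrade this to $H^{-}(p',p_{N})\geq\lambda$.

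The key idea is to perturb $\varphi$ so as to lower its inward-normal slope while preserving the touching-from-below property. For each $p\leq 0$ I would consider the perturbed test function $\tilde{\varphi}(t,X)=\varphi(t,X)+p\,(x^{N}-\psi^{N}(X'))$. Writing $g(X)=x^{N}-\psi^{N}(X')$, the local parametrization orients $\Omega$ near $X_{0}$ as $\{x^{N}>\psi^{N}(X')\}$, so $g\geq 0$ on $\bar{\Omega}\cap\bar{B}_{r_{0}}(X_{0})$ with $g=0$ on $\partial\Omega$; moreover $g$ is $\mathcal{C}^{1}$ since $\psi^{N}\in\mathcal{C}^{1}(\omega)$. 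Consequently, for $p\leq 0$ one has $\tilde{\varphi}=\varphi+p\,g\leq\varphi\leq u_{*}$ in a neighborhood, with equality at $(t_{0},X_{0})$ because $g(X_{0})=0$. Hence $\tilde{\varphi}$ is again an admissible test function touching $u_{*}$ from below at $(t_{0},X_{0})$.

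I would then compute the derivatives of $\tilde{\varphi}$ at $(t_{0},X_{0})$. Since $\nabla'\psi^{N}(X_{0}')=0$, the perturbation contributes only to the normal component, so $\tilde{\varphi}_{t}=-\lambda$, $\nabla'\tilde{\varphi}=p'$ and $\partial_{N}\tilde{\varphi}=p_{N}+p$. Applying the super-solution inequality to $\tilde{\varphi}$ gives $H(p',p_{N}+p)\geq\lambda$ for every $p\leq 0$, that is, $H(p',q)\geq\lambda$ for all $q\leq p_{N}$. To conclude, I would note that by definition of $H^{-}$ and the identity $H(p',\pi^{0}(p'))=A_{0}(p')$ one has $H^{-}(p',p_{N})=H(p',\min(p_{N},\pi^{0}(p')))$, and since $\min(p_{N},\pi^{0}(p'))\leq p_{N}$ the inequality just established yields $H^{-}(p',p_{N})\geq\lambda$, which is precisely the flux-limited super-solution condition.

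The only genuinely delicate point is the geometric claim that the graph parametrization can be chosen so that $g=x^{N}-\psi^{N}(X')\geq 0$ throughout $\bar{\Omega}\cap\bar{B}_{r_{0}}(X_{0})$, i.e.\ that the inward-normal coordinate increases into $\Omega$; this is exactly what guarantees that subtracting a nonnegative multiple of $g$ keeps $\tilde{\varphi}$ below $u_{*}$. Everything else reduces to the computation at $X_{0}$ using $\nabla'\psi^{N}(X_{0}')=0$ together with the definition of $H^{-}$, and the argument is self-contained, bypassing Lemma \ref{pentescri2} entirely.
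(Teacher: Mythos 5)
Your proof is correct and is essentially the paper's own direct argument for this theorem: both rest on perturbing the test function by a nonpositive multiple of $x^{N}-\psi^{N}(X')$, which stays below $u_{*}$ because $x^{N}\geq \psi^{N}(X')$ on $\bar{B}_{r_{0}}(X_{0})\cap\bar{\Omega}$ and vanishes at $X_{0}$, combined with $\nabla'\psi^{N}(X_{0}')=0$ so that only the normal slope moves. The remaining difference is cosmetic: you shift the normal slope to $\min(p_{N},\pi^{0}(p'))$ and land directly on $H^{-}(p',p_{N})\geq\lambda$, whereas the paper splits cases via $\pi^{\pm}$, shifts to $\pi^{-}(\nabla'\varphi,A_{0}(\nabla'\varphi))$, and concludes through $H^{-}\geq A_{0}$.
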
 

\begin{proof}
Let  $\varphi$ be a test function touching $u_{*}$ from below at $(t_{0},X_{0})\in (0,T)\times \partial\Omega$ then
$$\varphi_{t}+H(\nabla'\varphi,\partial_{N}\varphi)\geq 0 \quad \mbox{ at } (t_{0},X_{0}),$$ and let $\lambda=-\varphi_{t}(t_{0},X_{0})$,
so $$\partial_{N}\varphi\leq \pi^{-}(\nabla'\varphi,\lambda) \quad \mbox{ or } \quad \partial_{N}\varphi\geq \pi^{+}(\nabla'\varphi,\lambda).$$
If $\partial_{N}\varphi(t_{0},X_{0})\leq \pi^{-}(\nabla'\varphi,\lambda)$ then $$H^{-}(\nabla\varphi)=H(\nabla\varphi)$$ so 
$$\varphi_{t}+H^{-}(\nabla'\varphi,\partial_{N}\varphi)\geq 0.$$
If $\partial_{N}\varphi(t_{0},X_{0})\geq \pi^{+}(\nabla'\varphi,\lambda)$, we consider $$\phi(t,X)=\varphi(t,X)+(\pi^{-}(\nabla'\varphi,A_{0}(\nabla'\varphi))-\partial_{N}\varphi(t_{0},X_{0}))(x^{N}-\psi^{N}(X')),$$
where $\psi^{N}$ is the function defined previously, so $\phi$ is defined on a neighborhood $(0,T)\times\bar{B}_{r_{0}}(X_{0})\cap \bar{\Omega}$ of $(t_{0},X_{0}).$
Using that $$\pi^{-}(\nabla'\varphi,A_{0}(\nabla'\varphi))\leq \pi^{+}(\nabla'\varphi,\lambda)$$ and $$\forall X\in \bar{B}_{r_{0}}(X_{0})\cap \bar{\Omega}, \quad x^{N}\geq \psi^{N}(X'),$$ we have $$\forall (t,X)\in (0,T)\times\bar{B}_{r_{0}}(X_{0})\cap \bar{\Omega}, \quad \phi(t,X)\leq \varphi(t,X),$$
so $\phi$ is a test function such that $$\partial_{N}\phi(t_{0},X_{0})=\pi^{-}(\nabla'\varphi,A_{0}(\nabla'\varphi)),$$ $$\nabla'\phi(t_{0},X_{0})= \nabla'\varphi(t_{0},X_{0})$$ as $\nabla'\psi^{N}(X'_{0})=0$ and $$\phi(t_{0},X_{0})=\varphi(t_{0},X_{0}).$$
So $H(\nabla'\phi, \partial_{N}\phi(t_{0},X_{0}))=A_{0}(\nabla'\phi)$ and as $\phi$ is a test function,
$$\phi_{t}(t_{0},X_{0})+A_{0}(\nabla'\phi)\geq 0,$$
so we have
\begin{equation*}
\varphi_{t}(t_{0},X_{0})+H^{-}(\nabla'\varphi,\partial_{N}\varphi)\geq \varphi_{t}(t_{0},X_{0})+A_{0}(\nabla'\varphi)\geq 0. \qedhere
\end{equation*}
\end{proof}

\subsection{The stationary sub-solution case in dimension 1}

Let us give a proof of the following theorem which is a part of the reformulation of state constraints theorem for stationary sub-solutions in dimension 1.

\begin{theo}
\label{dim1}
Let $\Omega=]a,b[.$
If $u$ satisfies (\ref{lim2}) and
$$u+H(\nabla u) \leq 0 \quad \mbox{ in } \Omega$$
then
$$u+H^{-}(\nabla u) \leq 0 \quad \mbox{ on } \partial\Omega.$$
\end{theo}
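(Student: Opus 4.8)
The plan is to prove this one-dimensional stationary statement directly, mimicking the structure of the proof of Lemma \ref{pentescri} but specialized to $\Omega=]a,b[$ where the geometry trivializes. I will treat the left endpoint $X_0=a$ (the endpoint $b$ being symmetric); here the inward normal points in the direction of increasing $x$, so $N_{X_0}=+1$, and the local parametrization $\psi^N$ is simply the constant map onto $\{a\}$ with $\nabla'\psi^N\equiv 0$ (there is no genuine tangential variable in dimension $1$). Let $\varphi$ be a test function touching $u^*$ from above at $a$, set $\lambda=-u(a)$ so that the target inequality reads $H^-(\varphi'(a))\le\lambda$, and write $p_N=\varphi'(a)$. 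First I would invoke Lemma \ref{pentescri}: because $u+H(\nabla u)\le 0$ in $\Omega$ and $u$ satisfies (\ref{lim2}), the critical slope $\bar p$ is finite, satisfies $\bar p\le 0$, and gives $H(p_N+\bar p)\le\lambda$.

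From $H(p_N+\bar p)\le\lambda$ I would conclude the result by the same short computation as in part (b) of Proposition \ref{defequiv}. Since $\bar p\le 0$ we have $p_N+\bar p\le p_N$, and because $H^-$ is nonincreasing this yields
\begin{equation*}
H^-(p_N)\le H^-(p_N+\bar p)\le H(p_N+\bar p)\le\lambda,
\end{equation*}
where the middle inequality holds because $H^-\le H$ everywhere. This is exactly $u+H^-(\nabla u)\le 0$ at $a$, which is the desired boundary subsolution inequality. The argument for the right endpoint $b$ is identical after reversing the sign convention for the inward normal ($N_b=-1$), so that increasing the relevant slope again moves toward the minimum of $H$ along the normal direction.

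The main subtlety—and the step I would be most careful about—is the correct bookkeeping of the normal direction and the sign of $\bar p$ at each endpoint, together with checking that the constrained subsolution hypothesis genuinely propagates to the boundary via (\ref{lim2}). In higher dimensions Lemma \ref{pentescri} needs the full local parametrization $\psi^N$ and the perturbed test function machinery, but in dimension $1$ the tangential component disappears and the lemma reduces to a clean one-variable statement; the essential content is that the interior subsolution inequality, read at the boundary through the finite critical slope, forces $H$ evaluated at the test slope shifted in the inward (decreasing $H$) direction to be controlled by $\lambda$, and monotonicity of $H^-$ then does the rest. The rest of the proof is the routine computation above and requires no genuinely new ideas beyond Lemma \ref{pentescri}.
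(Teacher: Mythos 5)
Your argument is correct, but it takes a genuinely different route from the paper's own proof of Theorem \ref{dim1}. You obtain the boundary inequality by invoking the critical slope lemma (Lemma \ref{pentescri}, in its stationary form; the paper asserts the stationary statements hold with the same proofs, and Section 5.3 even supplies a simpler stationary proof that $\bar{p}$ is finite): with $\bar{p}\leq 0$ finite and $u(a)+H(\varphi'(a)+\bar{p})\leq 0$, the monotonicity chain $H^{-}(\varphi'(a))\leq H^{-}(\varphi'(a)+\bar{p})\leq H(\varphi'(a)+\bar{p})\leq -u(a)$ concludes. This is exactly part b) of Proposition \ref{defequiv} together with the first step of the proof of Theorem \ref{mainth}, specialized to $d=1$ and the stationary setting, and it is logically sound since Lemma \ref{pentescri} is proved independently of Theorem \ref{dim1}. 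The paper, however, deliberately proves this theorem \emph{without} that machinery: Section \ref{app:app1} exists precisely to give direct proofs avoiding the reduction of test functions. Its argument uses (\ref{lim2}) to select $y_{n}\in \left]a,a+\frac{1}{n}\right[$ with $u(y_{n})\geq u(a)-\epsilon$, constructs auxiliary functions $\varphi_{n}$ whose minimum of $\varphi_{n}-u$ over $\bar{\Omega}_{n}$ is forced to be attained at an interior point $x_{n}$ (by making $\varphi_{n}-u$ strictly larger at both ends of $\Omega_{n}$), applies the interior equation at $x_{n}$ to get $H(\phi_{n}'(x_{n}))\leq -u(a)+\epsilon$ and hence $\phi_{n}'(x_{n})\geq\pi^{-}(-u(a)+\epsilon)$, and finally transfers this slope bound to the original test function $\psi$ by a difference-quotient comparison along $x_{n}\rightarrow a$, yielding $\pi^{-}(-u(a)+\epsilon)\leq\psi'(a)$, i.e. $u(a)+H^{-}(\psi'(a))\leq\epsilon$ for every $\epsilon>0$. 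What each approach buys: yours is a few lines once the lemma is granted, but it rests on the full perturbed-test-function and coercivity argument behind $\bar{p}$; the paper's is elementary and self-contained, using only the interior equation, (\ref{lim2}) and the monotone structure of $H$ encoded by $\pi^{-}$, which is the stated purpose of the section. Your endpoint bookkeeping (inward coordinate $x-a$ at $a$, $b-x$ at $b$, with $H$ read in these coordinates) is the correct convention and consistent with the paper's definition of $H_{x}$; the only presentational gap is that you should state explicitly that you are using the stationary analogue of Lemma \ref{pentescri} rather than the evolution statement as written.
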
 

\begin{figure}
\begin{center}
  \includegraphics[width=12.0cm]{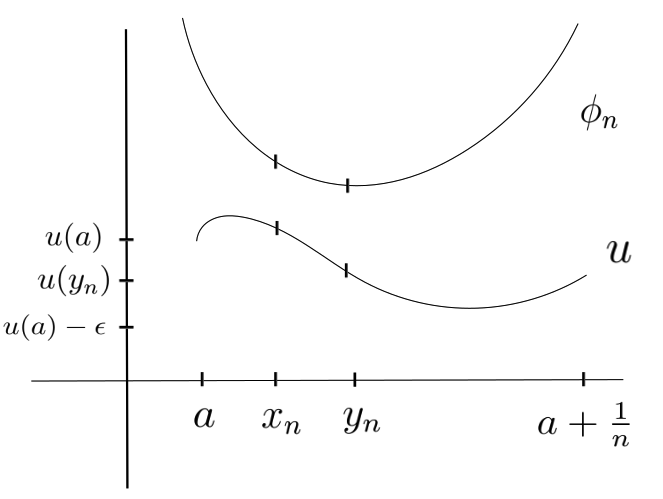}
  \caption{Illustration of the function $\phi_{n}$ in Theorem \ref{dim1}}
  \label{im7}
  \end{center}
  \end{figure}

\begin{proof}
Let $\psi$ be a test function such that $\psi\geq u^{*}$ and $\psi(a)=u^{*}(a).$
To simplify the notations, we will use $u$ instead of $u^{*}$.
Let $\epsilon>0.$ Let us first construct a sequence $(x_{n})_{n\in \mathbb{N}}$ which tends to $a$ such that
$$u(x_{n})\geq u(a)-\epsilon,$$
and such that $u$ allows a test function $\phi_{n}$ at $x_{n}$ such that 
$$\phi_{n}(a)\geq u(a).$$
\newline
\textbf{1st step: Construction of $x_{n}$ and $\phi_{n}$.}~\\
We define $\Omega_{n}=]a,a+\frac{1}{n}[$. As $$u(a)=\limsup_{y\rightarrow a,y\neq a} u(y),$$
it exists $ y_{n}\in \Omega_{n}$ such that $$u(y_{n})\geq u(a)-\epsilon.$$
Let $\varphi_{n}\in C^{1}$ such that $$\min_{\bar{\Omega}_{n}} \varphi_{n}=\varphi(y_{n}),$$
and $$\varphi_{n}(a)-u(a)>\varphi_{n}(y_{n})-u(y_{n})\geq 0,$$
and $$\varphi_{n}(a+\frac{1}{n})-u(a+\frac{1}{n})>\varphi_{n}(y_{n})-u(y_{n})\geq 0.$$
The function $\varphi_{n}$ is drawn in \ref{im7}.
So $$\min_{\bar{\Omega}_{n}} \varphi_{n}-u=\varphi_{n}(x_{n})-u(x_{n}) \quad \mbox{ where }x_{n}\in\Omega_{n}.$$
We define $\phi_{n}=\varphi_{n}+c_{n}$ such that $\phi_{n}(x_{n})=u(x_{n})$. We get the sequence $(x_{n})_{n\in\mathbb{N}}$.
\vspace{0,5cm}
\linebreak
\textbf{2nd step: Conclusion.}
\newline
As $\phi_{n}$ is a test function at $x_{n},$
$$\phi_{n}+H(\phi'_{n})\leq 0\quad \mbox{ at } x_{n},$$
so 
$$H(\phi'_{n})\leq -u(x_{n})\leq -u(a)+\epsilon,$$
so $$\pi^{-}(-u(a)+\epsilon)\leq\phi'_{n}(x_{n}).$$
Multiplying by $x_{n}-a$, we have
$$\begin{array}{lll}
\pi^{-}(-u(a)+\epsilon)(x_{n}-a) & \leq &\phi'_{n}(x_{n})(x_{n}-a) \\
& \leq & \phi_{n}(x_{n})-\phi_{n}(a)+o_{n\rightarrow +\infty}(x_{n}-a)\\
 & \leq & u(x_{n})-u(a)+o(x_{n}-a)\\
 & \leq & \psi(x_{n})-\psi(a)+o(x_{n}-a)\\
 & \leq & \psi'(a)(x_{n}-a)+o(x_{n}-a).
\end{array}$$
We deduce that $$\pi^{-}(-u(a)+\epsilon)\leq \psi'(a),$$
so $$\forall \epsilon >0 \quad u(a) +H^{-}(\psi'(a))\leq \epsilon,$$
Letting $\epsilon$ go to $0$, we have $$\psi(a)+H^{-}(\psi'(a))\leq 0.$$
\end{proof}

\subsection{Critical slope $\bar{p}$ is finite in the stationary case in Lemma \ref{pentescri} }

Let us give a simpler proof of the fact that the critical slope $\bar{p}$ is finite in the stationary case in Lemma \ref{pentescri}.

\begin{Lm}
Let $u$ be an upper semi-continuous sub-solution of $u+H(\nabla u) =0$ in $\Omega$ which satisfies (\ref{lim2})
 and let $\varphi$ be a test function touching $u$ from above at some point $X_{0}$ and $X_{0}=(X_{0}',x_{0}^{N}) \in \partial\Omega.$ Let $r_{0}$ be given by the definition \ref{defpsi}. Then the critical slope given by 
$$\bar{p}= \inf\left\{
p\in \mathbb{R}: \exists r \in (0,r_{0}], \quad \varphi(X)+p(x^{N}-\psi^{N}(X')) \geq u(X),  \quad
\forall X \in  \bar{\Omega}\cap \bar{B}_{r}(X_{0}) 
\right\}$$ 
is finite.
\end{Lm}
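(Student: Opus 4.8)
The plan is to prove that the critical slope $\bar p$ is finite in the stationary case by adapting the evolution-type argument of Lemma \ref{pentescri}, but exploiting the structural simplification afforded by the stationary equation: since there is no time variable, the forcing term $u(X)$ in the equation $u + H(\nabla u) = 0$ is directly controlled by the touching inequality, which lets me bound the Hamiltonian from below along the normal direction and invoke coercivity more cleanly. As in the evolution case, I would argue by contradiction, assuming $\bar p = -\infty$, and then reach a contradiction with the coercivity of $H$.

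\smallskip

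First I would suppose $\bar p = -\infty$, so that there exist $p_n \to -\infty$ and radii $r_n > 0$ with
$$u(X) \leq \varphi(X) + p_n\bigl(x^N - \psi^N(X')\bigr) \quad \text{for all } X \in \bar\Omega \cap \bar B_{r_n}(X_0).$$
After the standard modification replacing $\varphi$ by $\varphi + \|X - X_0\|^2$, I may assume the inequality is strict away from $X_0$, so there is $\delta_n > 0$ with a margin on the boundary sphere $\partial B_{r_n}(X_0) \cap \bar\Omega$. Using property (\ref{lim2}), I pick points $X_\epsilon \to X_0$ with $X_\epsilon \notin \partial\Omega$ and $u(X_0) = \lim_\epsilon u(X_\epsilon)$, and introduce the perturbed test function
$$\Psi(X) = \varphi(X) + p_n\bigl(x^N - \psi^N(X')\bigr) + \epsilon\,\frac{x_\epsilon^N - \psi^N(X'_\epsilon)}{x^N - \psi^N(X')},$$
whose singularity on $\partial\Omega$ forces the minimizer $Y_\epsilon$ of $\Psi - u$ to lie in the interior. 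The point of the stationary setting is that $\Psi$ is then a legitimate test function at $Y_\epsilon$, and the viscosity subsolution inequality $u(Y_\epsilon) + H(\nabla\Psi(Y_\epsilon)) \leq 0$ gives, after passing to the limit $\epsilon \to 0$ and using $\nabla'\psi^N(X'_0) = 0$,
$$u(X_0) + H\!\left(\nabla'\varphi(X_0),\,\partial_N\varphi(X_0) + p_n - \alpha\right) \leq 0,$$
where $\alpha = \limsup_{\epsilon\to 0} \epsilon\,(x_\epsilon^N - \psi^N(X'_\epsilon))/(y_\epsilon^N - \psi^N(Y'_\epsilon))^2 \in [0,+\infty]$.

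\smallskip

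From this inequality, coercivity of $H$ forces $\alpha < +\infty$: if $\alpha$ were infinite the Hamiltonian term would blow up to $+\infty$ while the left side stays bounded, a contradiction. With $\alpha$ finite and $\nabla'\varphi(X_0)$, $\partial_N\varphi(X_0)$, $u(X_0)$ all fixed, letting $p_n \to -\infty$ drives the argument of $H$ to infinity in the normal component, so coercivity again forces $H(\cdots) \to +\infty$, contradicting the uniform bound $u(X_0) + H(\cdots) \leq 0$. Hence $\bar p$ cannot be $-\infty$, and since $\varphi$ touches $u$ from above (so $p = 0$ is admissible, giving $\bar p \leq 0 < +\infty$), $\bar p$ is finite.

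\smallskip

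\emph{The main obstacle} I expect is purely bookkeeping rather than conceptual: verifying that the perturbed test function $\Psi$ genuinely attains its infimum over $\bar D_n$ at an interior point $Y_\epsilon$ and controlling the behavior of the singular term as $Y_\epsilon \to X_0$, exactly as in the proof of Lemma \ref{pentescri}. Since the stationary case merely drops the time variable and replaces $u_t$ by $u$ throughout, the estimates (\ref{imp}) and (\ref{ineq}) transpose verbatim, and the only genuine simplification is that the $(t - t_0)^2$ penalization disappears; consequently the proof is strictly shorter than the evolution-type one and requires no new ideas beyond those already deployed in Lemma \ref{pentescri}.
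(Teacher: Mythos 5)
Your proof is correct: it is a faithful transposition of the evolution-type argument of Lemma \ref{pentescri} (perturbed test function with the singular term $\epsilon/(x^{N}-\psi^{N}(X'))$, interior minimizer via (\ref{lim2}), coercivity to kill $\alpha$ and then $p_{n}$), and the paper itself asserts that the stationary proofs are obtained from the evolution ones by replacing $u_{t}$ with $u$. The two points you gloss over do hold: the zeroth-order term at the minimizer is $u(Y_{\epsilon})$, and it converges to $u(X_{0})$ by combining upper semicontinuity with the lower bound coming from the analogue of (\ref{ineq}); and since $\alpha\geq 0$, the normal component $p_{n}-\alpha$ tends to $-\infty$ whether or not $\alpha$ depends on $n$, so coercivity applies directly. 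However, the paper's own proof of this lemma takes a genuinely different and shorter route, which is the entire point of its Section \ref{app:app1}: in the stationary case a sub-solution satisfies $H(\nabla\phi)\leq -u(X)\leq M$ at every interior touching point because $u$ is bounded on the compact $\bar{\Omega}$, so coercivity yields a uniform gradient bound $C$ and hence $u$ is $C$-Lipschitz in $\Omega$ (Proposition 1.14 of Ishii in \cite{ishbar}); condition (\ref{lim2}) extends the Lipschitz estimate to $\bar{\Omega}$, and then testing the defining inequality of $\bar{p}$ along the normal segment $X'=X_{0}'$, $x^{N}\geq x_{0}^{N}$ gives $p\geq -(C+C')$ for every admissible $p$, with $C'$ the Lipschitz constant of $\varphi$. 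This exploits precisely what is special about the stationary setting — a global a priori Lipschitz bound on $u$, unavailable in the evolution case where $u_{t}$ cannot be controlled the same way — and eliminates both the singular perturbation and the double limit in $\epsilon$ and $n$. So your opening claim that the stationary structure makes the coercivity argument ``cleaner'' is slightly off target: in your version the contradiction argument is word-for-word the evolution one (the bound $-\varphi_{t}(t_{0},X_{0})$ there is just as fixed as $-u(X_{0})$ here); the genuine simplification the stationary equation offers is the Lipschitz shortcut, which your proof does not use. Your route buys uniformity with Lemma \ref{pentescri}; the paper's buys elementarity, avoiding the perturbed-test-function machinery altogether.
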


\begin{proof}
In the stationary case, let us give another proof of the fact that $\bar{p}$ is finite.
The function $u$ is a sub-solution so is bounded on the compact set $\bar{\Omega}$. Let $M>0$ be such that $|u|\leq M$. 
So for all test function $\phi$ touching from above at $X\in \Omega$, we have
$$ H(\nabla\phi)\leq -u(X)\leq M.$$
As $H$ is coercive, $|\nabla\phi|\leq C$ for some $C>0$, so $u$ is $C$-lipschitz on $\Omega$ by the proposition 1.14 of Ishii in \cite{ishbar}. 
Using the assumption (\ref{lim2}), for $(x,y) \in \bar{\Omega}^{2}$, it exists a sequence $(x_{n})_{n\in\mathbb{N}}$ of  $\Omega$ which tends to $x$ such that $\lim_{n\rightarrow +\infty}u(x_{n})=u(x)$ and a sequence $(y_{n})_{n\in\mathbb{N}}$ of $\Omega$ which tends to $y$ such that $\lim_{n\rightarrow +\infty}u(y_{n})=u(y),$ so 
$$|u(x_{n})-u(y_{n})|\leq C|x_{n}-y_{n}|,$$ and letting $n$ go to $+\infty,$ we have
$$|u(x)-u(y)|\leq C|x-y|,$$
so $u$ is $C$-lipschitz on $\bar{\Omega}.$
Let $\varphi$ be a test function touching $u$ from above at $X_{0}\in \partial\Omega,$ (see definition \ref{solvisc}). 
As the function $\varphi$ is of class $\mathcal{C}^{1},$ it is $C'$-lipschitz on $\bar{\Omega}$, so for $p$ satisfying
$$\varphi(X)-\varphi(X_{0})+p(x^{N}-\psi^{N}(X')) \geq u(X) - u(X_{0}),$$
on $B_{r}(X_{0})$,
we deduce that
$$C'|X-X_{0}|+p(x^{N}-\psi^{N}(X')) \geq -C|X-X_{0}|,$$
so taking $X'=X_{0}'$ et $x^{N}\geq x_{0}^{N},$ we have 
$$C'(x^{N}-x_{0}^{N})+p(x^{N}-x_{0}^{N})\geq -C(x^{N}-x_{0}^{N}),  $$
so $p\geq -(C+C'),$ and $\bar{p}$ is finite.
\end{proof}

\textbf{Acknowledgements.} The author thanks C. Imbert for all his advices concerning this work and R. Monneau for discussions about the motivation of the problem. This work was partially supported by the ANR-12-BS01-0008-01 HJnet project.

\bibliography{bibli}

\begin{thebibliography}{1}

\bibitem{im}
Cyril Imbert and R{\'e}gis Monneau.
\newblock {Flux-limited solutions for quasi-convex Hamilton-Jacobi equations on
  networks}.
\newblock {\em Annales scientifiques de l'ENS, to appear}, February 2016.
\newblock 103 pages.

\bibitem{im2}
Cyril Imbert and R~Monneau.
\newblock {Quasi-convex Hamilton-Jacobi equations posed on junctions: the
  multi-dimensional case}.
\newblock 28 pages. Second version, July 2016.

\bibitem{son}
Halil~Mete Soner.
\newblock Optimal control with state-space constraint i.
\newblock {\em SIAM Journal on Control and Optimization}, 24(3):552--561, 1986.

\bibitem{ish}
Hitoshi Ishii and Shigeaki Koike.
\newblock A new formulation of state constraint problems for first-order pdes.
\newblock {\em SIAM Journal on Control and Optimization}, 34(2):554--571, 1996.

\bibitem{RSZ}
Zhiping Rao, Antonio Siconolfi, and Hasnaa Zidani.
\newblock Transmission conditions on interfaces for
  {H}amilton-{J}acobi-{B}ellman equations.
\newblock {\em J. Differential Equations}, 257(11):3978--4014, 2014.

\bibitem{RZ}
Zhiping Rao and Hasnaa Zidani.
\newblock Hamilton--jacobi--bellman equations on multi-domains.
\newblock In {\em Control and Optimization with PDE Constraints}, pages
  93--116. Springer Basel, 2013.

\bibitem{achtch}
Yves Achdou and Nicoletta Tchou.
\newblock Hamilton-jacobi equations on networks as limits of singularly
  perturbed problems in optimal control: dimension reduction.
\newblock {\em Communications in Partial Differential Equations},
  40(4):652--693, 2015.

\bibitem{acct}
Yves Achdou, Fabio Camilli, Alessandra Cutr{\`{\i}}, and Nicoletta Tchou.
\newblock Hamilton-{J}acobi equations constrained on networks.
\newblock {\em NoDEA Nonlinear Differential Equations Appl.}, 20(3):413--445,
  2013.

\bibitem{ishbar}
Yves Achdou, Guy Barles, Hitoshi Ishii, and Grigory~L Litvinov.
\newblock {\em Hamilton-Jacobi Equations: Approximations, Numerical Analysis
  and Applications: Cetraro, Italy 2011, Editors: Paola Loreti, Nicoletta Anna
  Tchou}, volume 2074.
\newblock Springer, 2013.

\end{thebibliography}
\bibliographystyle{unsrt}

\end{document}